\documentclass[10pt,3p,times]{elsarticle}
\usepackage{amssymb,amsmath,amsthm,hyperref,cleveref,bm}
\journal{Physica D}
\newtheorem{conjecture}{Conjecture}[section]

\newtheorem{theorem}{Theorem}[section]

\crefname{equation}{}{}
\crefname{figure}{Fig.}{}
\crefname{section}{Section}{}
\begin{document}
\begin{frontmatter}
\title{Solution landscape of the Onsager model identifies non-axisymmetric critical points}
\author[inst1]{Jianyuan Yin}
\ead{yinjy@pku.edu.cn}
\address[inst1]{School of Mathematical Sciences, Laboratory of Mathematics and Applied Mathematics, Peking University, Beijing 100871, China}
\author[inst2]{Lei Zhang\corref{cor1}}\ead{zhangl@math.pku.edu.cn}
\author[inst1]{Pingwen Zhang}\ead{pzhang@pku.edu.cn}
\cortext[cor1]{Corresponding Authors}
\address[inst2]{Beijing International Center for Mathematical Research, Center for Quantitative Biology, Peking University, Beijing 100871, China}
\begin{abstract}
We investigate critical points of the Onsager free-energy model on a sphere with different potential kernels, including the dipolar potential, the Maier--Saupe potential, the coupled dipolar/Maier--Saupe potential, and the Onsager potential.
A uniform sampling method is implemented for the discretization of the Onsager model, and solution landscapes of the Onsager model are constructed using saddle dynamics coupled with downward/upward search algorithms.
We first construct the solution landscapes with the dipolar and Maier--Saupe potentials, for which all critical points are axisymmetric.
For the coupled dipolar/Maier--Saupe potential, the solution landscape shows a novel non-axisymmetric critical point, named \emph{tennis}, which exists for a wide range of parameters.
We further demonstrate various non-axisymmetric critical points in the Onsager model with the Onsager potential, including \emph{square, hexagon, octahedral, cubic, quarter, icosahedral}, and \emph{dodecahedral} states.
The bifurcation diagram is presented to show the primary and secondary bifurcations of the isotropic state and reveal the emergence of the critical points.
The solution landscape provides an efficient approach to show the global structure of the model system as well as the bifurcations of critical points, which can not only support the previous theoretical conjectures but also propose new conjectures based on the numerical findings.
\end{abstract}
\begin{keyword}
Onsager model \sep Liquid crystals \sep Critical points \sep Saddle points \sep Morse index \sep Solution landscape
\end{keyword}
\end{frontmatter}

\section{Introduction}
In 1949, Onsager proposed a free-energy model to describe the isotropic-nematic phase transition in three-dimensional rod-like liquid crystals, and related the equilibrium states to critical points of the model \cite{onsager1949effects}.
Since then, this molecular model has been applied to describe static and dynamic phenomena of liquid crystals \cite{doi1988theory,de1995physics,zhang2007stable,wang2021modeling}.
By including the positional dependence of the distribution function, the Onsager model can describe complicated phenomena of liquid crystals \cite{holyst1988director,zhang2012onsager,liang2014rigid,yao2018topological}.

Let $\rho: S^2\to \mathbb{R}$ be a probability density function satisfying,
\begin{equation}\label{eqn:onsagerrho}
   \rho(\mathbf{p})\geqslant0, \quad \int_{S^2} \rho(\mathbf{p}) \mathrm{d}\mathbf{p} = 1,
\end{equation}
where $\rho(\mathbf{p})$ characterizes the probability of finding molecules with orientation $\mathbf{p}$ on the unit sphere $S^2$.
Based on the second virial approximation, the Onsager free-energy model is derived as,
\begin{equation}\label{eqn:onsager}
  E(\rho) = \tau\int_{S^2} \rho(\mathbf{p}) \log \rho (\mathbf{p}) \mathrm{d}\mathbf{p}
  + \frac12 \int_{S^2} \int_{S^2} k(\mathbf{p} \cdot\mathbf{q}) \rho(\mathbf{p}) \rho (\mathbf{q}) \mathrm{d}\mathbf{p} \mathrm{d}\mathbf{q},
\end{equation}
combined with the constraint \cref{eqn:onsagerrho}.
The first term in \cref{eqn:onsager} is an entropy term, and the positive parameter $\tau$ is proportional to the absolute temperature of the system \cite{ball2017liquid}.
The second term in \cref{eqn:onsager} represents the potential energy due to interactions between pairs of molecules but also contains an entropic component \cite{ball2021axisymmetry}.
For the derivation and validity of the Onsager model, one can also refer to \cite{palffy2017onsager}.
The kernel $k$ depends only on the scalar product $\mathbf{p}\cdot\mathbf{q}$ of two orientations $\mathbf{p}$ and $\mathbf{q}$ to ensure the rotational symmetry \cite{vollmer2017critical}.
Important examples of the kernel potentials include,
\begin{enumerate}
  \item Dipolar: $k(t) = -\sigma t$;
  \item Maier--Saupe: $k(t) = -\kappa t^2$;
  \item Coupled dipolar/Maier--Saupe: $k(t) = -\sigma t -\kappa t^2$;
  \item Onsager: $k(t) = \mu \sqrt{1-t^2}$.
\end{enumerate}
Here $\sigma, \kappa, \mu$ are positive parameters \cite{ball2021axisymmetry}.
The Onsager potential, $k(\mathbf{p}\cdot\mathbf{q})=|\mathbf{p}\times\mathbf{q}|$, is one of the first interaction kernels that have been derived, while the non-analytic nature of this potential brings difficulties to explicit non-trivial solutions.
Subsequently, Maier and Saupe \cite{maier1958einfache} suggested using a similar but analytic potential of $k(\mathbf{p}\cdot\mathbf{q})=\frac{1}{3}-|\mathbf{p}\times\mathbf{q}|^2$.
Note that critical points of the Onsager model are not affected by adding a constant to $k$.
For the Maier--Saupe and Onsager potentials, $k$ is an even function, corresponding to the head-to-tail symmetry of liquid-crystal molecules.

For an absolutely continuous interaction kernel $k$, the critical points of the Onsager model are proved to be smooth, bounded away from zero, and satisfy the Euler--Lagrange equation \cite{ball2021axisymmetry}.
A probability density function $\rho$ is called \emph{axisymmetric} if there exists some vector $\mathbf e$ and some function $f$ such that $\rho(\mathbf p)=f(\mathbf p\cdot\mathbf e)$.
For the dipolar potential, any critical point is axisymmetric, and a full description of critical points is presented in \cite{fatkullin2005critical}.
For the Maier--Saupe potential, a complete classification of all the critical points have been studied extensively with different methods in \cite{fatkullin2005critical} and \cite{liu2005axial}, which accords with previous numerical simulations in \cite{faraoni1999rigid,forest2004weak}.
For the coupled dipolar/Maier--Saupe potential, stable critical points are proved to be axisymmetric \cite{zhou2007characterization}.
Furthermore, with some parameters, there exists an unstable critical point that is non-axisymmetric \cite{zhou2007characterization}.
For the Onsager potential, finding and classifying all critical points of the Onsager free-energy model in three dimensions becomes much more complicated.
By reformulating the critical points as an eigenvalue problem, an iterative scheme was derived to compute axisymmetric critical points in \cite{kayser1978bifurcation}.
The complete set of all bifurcation points corresponding to the primary bifurcation of the isotropic state $\rho_0(\mathbf{p})=1/4\pi$ has been obtained in \cite{vollmer2017critical}.
A transcritical bifurcation from this trivial solution occurs first and all critical points around this bifurcation point are axisymmetric.
Recently, Ball proved that non-axisymmetric critical points exist for a wide variety of kernel potentials, including the Onsager potential \cite{ball2021axisymmetry}.
To be specific, for large enough $\mu$, there exists a non-axisymmetric critical point with cubic symmetry, that is, in some Cartesian coordinate $(x,y,z)$,
\begin{equation}\label{eqn:p48}
\rho(x,y,z)=\rho(y,x,z)=\rho(x,z,y)=\rho(-x,y,z).
\end{equation}

Despite the extensive analytical works have been done for the critical points of the Onsager model, how to find these critical points, especially for non-axisymmetric critical points, is an important but unsolved problem.
According to the Morse theory \cite{milnor1963morse}, the (Morse) index of a critical point is the maximal dimension of a subspace on which the Hessian operator is negative definite, and local properties of a critical point can be described by its index.
Compared to the stable (index-0) critical points (\emph{i.e.} minimizers), the unstable (index$>$0) critical points (\emph{i.e.} saddle points) are much more difficult to compute because of their unstable nature.
There are several numerical methods for computing critical points, including the generalized gentlest ascent dynamics \cite{quapp2014locating}, the iterative minimization formulation \cite{gao2015iterative}, and the minimax method \cite{li2001minimax,li2019local2}.
An alternative method is to solve the Euler--Lagrange equation directly, such as homotopy methods \cite{chen2004search,hao2014bootstrapping} and Newton--Raphson methods with a deflation technique \cite{farrell2015deflation},
However, the most existing numerical methods highly rely on good initial guesses for finding multiple critical points, which often requires \emph{a priori} knowledge of the model systems.
Recently, Yin et al. proposed an efficient numerical method to compute the \emph{solution landscape}, which is a pathway map consisting of all critical points and their connections, in order to systematically find multiple critical points without tuning initial guesses \cite{yin2020construction}.
By applying the high-index saddle dynamics (HiSD) method \cite{yin2019high}, the solution landscape can be efficiently constructed by downward/upward search algorithms.
This numerical approach has been successfully applied to the Landau-type free-energy functionals, including the defect landscape of confined nematic liquid crystals using a Landau--de Gennes model \cite{han2021solution,yin2020construction} and the nucleation of quasicrystals from period crystals using the Lifshitz--Petrich model \cite{yin2020nucleation}.

In this paper, we aim to find global structure of the critical points of the Onsager model with different potential kernels by using a solution landscape approach.
In particular, we will identify non-axisymmetric critical points for the coupled dipolar/Maier--Saupe potential and the Onsager potential, and then investigate the bifurcation diagrams to show the emergence of the critical points.
The rest of the paper is organized as follows.
The Onsager model and numerical methods are introduced in \cref{sec:methods}.
Numerical results are presented in \cref{sec:results}.
In order to reveal various non-axisymmetric critical points, we construct the solution landscapes of the Onsager model.
Some conjectures are proposed based on the numerical results.
Final conclusions and discussions are presented in \cref{sec:conclusions}.

\section{Models and methods}\label{sec:methods}
\subsection{Onsager model}
For the Onsager free-energy model \cref{eqn:onsager} with the constraint \cref{eqn:onsagerrho}, we set $\tau=1$ due to simple rescaling and adjust the model with the parameters in the interaction kernel $k$.
The variation of $E$ is,
\begin{equation}\label{eqn:onsagerfv}
  \frac{\delta E}{\delta \rho}(\rho)
  =\log\rho(\mathbf{p}) + 1 + \int_{S^2} k(\mathbf{p} \cdot\mathbf{q}) \rho (\mathbf{q}) \mathrm{d}\mathbf{q}=\log\rho(\mathbf{p}) +1+ \mathcal{K}\rho(\mathbf{p}),
\end{equation}
where the self-adjoint linear operator $\mathcal{K}: L^2(S^2)\to L^2(S^2)$ of the nonlocal interaction is defined as,
\begin{equation}\label{eqn:onsagerkernelconv}
  \mathcal{K}\rho(\mathbf{p}) = \int_{S^2} k(\mathbf{p} \cdot\mathbf{q}) \rho (\mathbf{q}) \mathrm{d}\mathbf{q}.
\end{equation}
To deal with the constraint \cref{eqn:onsagerrho}, the orthogonal projection operator to the subspace $L_0^2(S^2) = \{\rho \in L^2(S^2): \int_{S^2} \rho(\mathbf{p}) \mathrm{d}\mathbf{p}=0\}$ is defined as,
\begin{equation}\label{eqn:onsagerprojectionoperator}
  \mathcal{P}\rho(\mathbf{p})
  = \rho(\mathbf{p}) - \frac{1}{4\pi}\int_{S^2} \rho (\mathbf{q}) \mathrm{d}\mathbf{q},
\end{equation}
and then the derivative of $E$ is $\nabla E(\rho) = \mathcal{P}(\log\rho + \mathcal{K}\rho)$.
The critical points of \cref{eqn:onsager} are solutions to the Euler--Lagrange equation, $DE(\rho)=0$, that is,
\begin{equation}\label{eqn:onsagerel}
  \log\rho(\mathbf{p}) + \int_{S^2} k(\mathbf{p} \cdot\mathbf{q}) \rho (\mathbf{q}) \mathrm{d}\mathbf{q} - \xi=0,
\end{equation}
where $\xi$ is the Lagrange multiplier corresponding to the mass-conservation constraint \cref{eqn:onsagerrho}.
The second variation of $E$ is, for $\upsilon \in L_0^2(S^2)$,
\begin{equation}\label{eqn:onsager2v}
  \frac{\delta^2 E}{\delta \rho^2}(\rho)[\upsilon]=
  \int_{S^2} \left[\frac{\upsilon^2(\mathbf{p})}{\rho(\mathbf{p})} + \upsilon(\mathbf{p}) \mathcal{K} \upsilon(\mathbf{p}) \right]\mathrm{d}\mathbf{p}
  = \langle \upsilon, \upsilon/\rho + \mathcal{K} \upsilon\rangle,
\end{equation}
and the Hessian can be expressed as $\nabla^2 E(\rho) = \mathcal{P} (\rho^{-1} + \mathcal{K})\mathcal{P}$.

From the Euler--Lagrange equation \cref{eqn:onsagerel}, the isotropic state $\rho_0(\mathbf{p}) = 1/4\pi$ is a critical point for general kernels.
The Hessian at the isotropic state, $\nabla^2 E(\rho_0)=4\pi \mathcal{I} +\mathcal{K}$ has the same eigenvectors as those of $\mathcal{K}$.
Since $\mathcal{K}$ has an obvious eigenvector $\rho_0$, all the other eigenvectors are in $L_0^2(S^2)$.
For $\max k(\cdot) \leqslant W(2/\pi)/16\approx 0.0262$, the isotropic state is the unique solution and the global minimizer of the Onsager model \cref{eqn:onsager} \cite{vollmer2017critical}.
Here $W$ denotes the Lambert-W function, the inverse function of $x \exp x$.

Because of the symmetry of the Onsager model \cref{eqn:onsager},
\begin{equation}\label{eqn:rotationalsymmetry}
  E\left(\rho(\mathbf{p})\right)=E\left(\rho\left(\mathbf{R}\mathbf{p}\right)\right), \quad \forall \mathbf{R}\in O(3),
\end{equation}
each anisotropic critical points is on a solution manifold and not isolated.
Therefore, the Hessian \cref{eqn:onsager2v} at an anisotropic critical point have zero eigenvalues.
To be specific, Hessians at axisymmetric anisotropic critical points have zero eigenvalues of multiplicity two, and Hessians at non-axisymmetric ones have zero eigenvalues of multiplicity three.
On the contrary, the Hessian at the isotropic state has no zero eigenvalues in general.
These zero eigenvalues will bring some difficulties to numerical computations.

For the coupled dipolar/Maier--Saupe potential, all the critical points of the Onsager model \cref{eqn:onsager} can be described as critical points of a finite-dimensional model within orthogonal transformations.
This finite-dimensional model can reduce computational costs and, more importantly, Hessians at critical points have no zero eigenvalues in general.
However, critical points may have lower indices in this reduced model, and there may exist extraneous critical points as well.
Therefore, we only present the details of the reduced model in \ref{apd:a} and will not apply it in practical computations.

\subsection{Discretization methods}
Since the Onsager model \cref{eqn:onsager} involves no gradient terms, $N$ points are sampled on the spherical surface $S^2$ to discretize the probability density function $\rho$.
Starting from a regular icosahedron inscribed in the unit sphere, each face is subdivided four times to obtain $N=2562$ points $\{\mathbf{x}_i\}_{i=1}^{N}$ on the sphere.
By minimizing the Thomson problem of $N$ particles,
\begin{equation}\label{eqn:onsagerthomson}
  \min_{\mathbf{x}_1,\cdots, \mathbf{x}_N\in S^2} \sum_{i=1}^N \sum_{j=i+1}^{N}\frac{1}{\|\mathbf{x}_i-\mathbf{x}_j\|_2},
\end{equation}
we obtain $N$ particles $\{\mathbf{x}_i\}_{i=1}^{N}$ approximatively uniformly distributed on the spherical surface.
We have checked the result of this subdivision with an $N=10242$ subdivision and ensured that the $N=2562$ discretization can provide enough accuracy.

We denote $\bm \rho=(\rho_i)=(\rho(\mathbf{x}_i))\in \mathbb{R}^N$ as the node values of the density distribution, and the constraint \cref{eqn:onsagerrho} is discretized as,
\begin{equation}\label{eqn:onsagerdiscons}
  \bm \rho >0;\quad h\bm 1^\top \bm\rho = 1,
\end{equation}
where $h=4\pi/N$ is the average area per particle, and $\bm 1\in \mathbb{R}^N$ is the vector with all entries one.
The Onsager free-energy model is discretized as,
\begin{equation}\label{eqn:onsagerdisenergy}
  E(\bm\rho)
  =h\sum_{i=1}^{N}\rho_i \log \rho_i + \frac{h^2}{2} \sum_{i=1}^{N}\sum_{j=1}^{N} K_{ij}\rho_i \rho_j
  = h \bm \rho^\top \log\bm \rho +
  \frac{h^2}{2} \bm \rho^\top \mathbf{K} \bm \rho,
\end{equation}
where $\mathbf{K} = (K_{ij})=(k(\mathbf{x}_i \cdot \mathbf{x}_j))$.

As the interaction term becomes dominant, some equilibrium density distributions can be highly anisotropic, and many entries of $\bm\rho$ can be extremely close to zero which may suffer from underflow.
Therefore, we iterate $\log\bm\rho$ instead of $\bm \rho$ in practical computations.
For the almost-uniformly-sampled points $\{\mathbf{x}_i\}$, we simply consider the Euclidian inner product instead of the discretized $L^2$ inner product.
The gradient flow of \cref{eqn:onsagerdisenergy} is,
\begin{equation}\label{eqn:onsagerdisgf}
  \dot{\bm \rho} = -\nabla E(\bm \rho)=-h\log \bm \rho - h^2 \mathbf K \bm \rho + \xi \bm{1},
\end{equation}
where $\xi$ is the Lagrangian multiplier.
Since $\bm \rho$ can be extremely close to zero, an explicit Euler scheme often requires a tiny step size due to a large $|\log \bm \rho|$.
Therefore, we impose a semi-implicit scheme with a fixed step size $\Delta t$ by taking the nonlinear part implicitly,
\begin{equation}\label{eqn:onsagergfsi}
  \exp \bm \psi^{n+1} +\Delta t \;h\bm \psi^{n+1} = \bm \rho^n -\Delta t \; h^2\mathbf K \bm \rho^n +\Delta t\; \xi^n \bm{1},
\end{equation}
where the multiplier $\xi^n$ is calculated as,
\begin{equation}\label{eqn:onsagergfxi}
  \xi^n = N^{-1} \bm 1^\top (h\log \bm{\rho}^n+ h^2 \mathbf{K} \bm{\rho}^n).
\end{equation}
Here, $\bm{\rho}^n$ represents the numerical solution $\bm\rho$ at the $n$-th iteration step, and $\bm \psi^{n+1}$ represents the numerical solution $\log\bm\rho$ at the $(n+1)$-th iteration step which may not satisfy the constraint \cref{eqn:onsagerdiscons}.
Although  $\bm \psi^{n+1}$ can be directly solved from \cref{eqn:onsagergfsi} using the Lambert-W function, this procedure can be numerically unstable for small $\Delta t$.
Alternatively, we solve $\bm \psi^{n+1}$ from \cref{eqn:onsagergfsi} using the Newton's method with the initial value $\log {\bm \rho}^n$, and calculate
\begin{equation}\label{eqn:onsagerretraction}
  \log {\bm \rho}^{n+1}=
  \bm \psi^{n+1} -\log (h \bm 1^\top  \exp \bm \psi^{n+1})\bm{1},
\end{equation}
to obtain a density $\bm\rho^{n+1}$ satisfying the constraint \cref{eqn:onsagerdiscons}.

\subsection{Saddle dynamics}
The saddle dynamics is designed to search unstable saddle points with a given index.
A nondegenerate index-$k$ saddle point ($k$-saddle) is a local maximum on a $k$-dimensional subspace $\mathcal{V}$, and a local minimum on its orthogonal complement $\mathcal{V}^\perp$.
The HiSD for a $k$-saddle ($k$-HiSD) is given by,
\begin{equation}\label{eqn:onsagerhisd}
\left\{
\begin{aligned}
\dot{\bm{\rho}}  &=-\left(\mathbf{I}-\sum\limits_{i=1}^{k} 2\bm{v}_{i} \bm{v}_{i}^{\top}\right) \nabla {E}(\bm{\rho}), \\
\dot{\bm{v}_i}&=-\left(\mathbf{I}-\bm{v}_i\bm{v}_i^\top- \sum\limits_{j=1}^{i-1} 2\bm{v}_j\bm{v}_j^\top\right) \nabla^2 E(\bm{\rho})\bm{v}_i, \quad i=1, \cdots, k.
\end{aligned}
\right.
\end{equation}
The $k$-HiSD involves a position variable $\bm \rho$ and $k$ ascent variables $\bm v_i$ with an initial condition,
\begin{equation}\label{eqn:onsagerhisdinitial}
\bm \rho=\bm \rho^{(0)}, \quad \bm v_i = \bm v_i^{(0)}\in \mathbb{R}_0^N, \quad
\mathrm{s.t.}\left\langle\bm v_j^{(0)},\bm v_i^{(0)}\right\rangle=\delta_{ij}, \quad i,j=1,\cdots,k,
\end{equation}
where $\mathbb{R}_0^N=\{\bm v \in\mathbb{R}^N: \bm 1^\top \bm v=0\}$ ensures the constraint \cref{eqn:onsagerdiscons}.
The dynamics for $\bm\rho$ in \cref{eqn:onsagerhisd} is a transformed gradient flow, \emph{i.e.},
\begin{equation}\label{eqn:onsagerhisdtransform}
\dot{\bm{\rho}} = \mathbf{P}_{\mathcal{V}}\nabla E(\bm \rho)- (\mathbf{I}-\mathbf{P}_{\mathcal{V}})\nabla E(\bm \rho).
\end{equation}
Here $\mathbf{P}_{\mathcal{V}}\nabla E(\bm \rho)$, which is the orthogonal projection of $\nabla E(\bm \rho)$ on the subspace $\mathcal{V}$, represents the gradient ascent direction to find a local maximum along the subspace $\mathcal{V}=\operatorname{span}\{\bm v_1, \cdots,\bm v_k\}$, while $(\mathbf{I}-\mathbf{P}_{\mathcal{V}})\nabla E(\bm \rho)$ is the gradient descent direction on $\mathcal{V}^\perp$.

The dynamics for $\bm v_i$ in \cref{eqn:onsagerhisd} renews the subspace $\mathcal{V}$ by finding the eigenvectors corresponding to the smallest $k$ eigenvalues of the Hessian $\nabla^2 E(\bm \rho)$ at the current position $\bm \rho$.
The $i$-th eigenvector $\bm v_i$ is obtained by solving a constrained optimization problem,
\begin{equation}\label{eqn:onsagerhisdrayleigh}
\min_{\bm v_i \in \mathbb{R}_0^N} \quad \left\langle\nabla^2 E(\bm \rho) [\bm v_i], \bm v_i\right\rangle \quad \mathrm{s.t.}\left\langle\bm v_j,\bm v_i\right\rangle=\delta_{ij},\quad j=1,\cdots,i-1,
\end{equation}
with the knowledge of $\bm v_1, \cdots,\bm v_{i-1}$ using the gradient flow.
The dynamics \cref{eqn:onsagerhisd} is linearly stable if and only if at index-$k$ saddle points \cite{yin2019high}.
The HiSD method can also be generalized for equality-constrained cases \cite{yin2020constrained} and non-gradient (dynamical) systems \cite{yin2020searching}.

The Hessian of \cref{eqn:onsagerdisenergy} $\nabla^2 E(\bm \rho)$ is a linear operator on $\mathbb{R}_0^N$,
\begin{equation}\label{eqn:onsagerdishess}
  \nabla^2 E(\bm \rho)=
  \mathbf{P}\left(h\operatorname{Diag}(\bm\rho^{-1})+h^2\mathbf{K}\right)\mathbf{P},
\end{equation}
where $\mathbf{P} = \mathbf{I} - \bm 1\bm 1^\top / N$ is the projection matrix concerning the constraint \cref{eqn:onsagerdiscons}.
Here $\bm\rho^{-1}=(\rho_i^{-1})$ is the elementwise inverse of $\bm \rho$.
Because of the high anisotropy of $\bm \rho$, Hessians \cref{eqn:onsagerdishess} at critical points can be highly ill-conditioned.
Fortunately, in the saddle dynamics, only the smallest $k$ eigenvalues need to be considered, so we alternatively replace $\nabla^2 E(\bm\rho)$ in \cref{eqn:onsagerhisd} with,
\begin{equation}\label{eqn:onsagerdishessm}
  H_M(\bm \rho)=
  \mathbf{P}\left(h\operatorname{Diag}(\max\{\bm\rho^{-1}, M\})+h^2\mathbf{K}\right)\mathbf{P},
\end{equation}
where $M$ is a large constant set as $10^8$.
In fact, for a large $M$ from $10^6$ to $10^{10}$, $H_M(\bm \rho)$ can present the smallest several eigenvalues and corresponding eigenvectors which are fairly consistent with $\nabla^2 E(\bm \rho)$ numerically.
In numerical implementation, the dynamics of $\bm v_i$ is solved using the locally optimal block preconditioned conjugate gradient (LOBPCG) method \cite{knyazev2001toward}, and the details can be found in \cite{yin2019high}.

For numerical stability, the saddle dynamics is solved in a similar manner to the gradient flow \cref{eqn:onsagerdisgf} by taking the nonlinear part implicitly.
The time discretization of $\bm \rho$ in the HiSD \cref{eqn:onsagerhisd} is given by,
\begin{equation}\label{eqn:onsagerhisdrho}
\frac{\exp{\bm \psi}^{n+1} - \bm \rho^n}{\Delta t}  =-h{\bm \psi}^{n+1} - h^2 \mathbf K \bm \rho^n + \xi^n \bm{1}+\sum\limits_{i=1}^{k} 2 \langle \bm{v}_{i}^n, \nabla {E}(\bm{\rho}^n)\rangle\bm{v}_{i}^n,
\end{equation}
where $\xi^n$ is calculated as \cref{eqn:onsagergfxi}.
Then ${\bm \psi}^{n+1}$ is solved from \cref{eqn:onsagergfsi} using Newton's method with the initial value $\log {\bm \rho}^n$, and ${\bm \rho}^{n+1}$ is obtained by \cref{eqn:onsagerretraction} eventually.
This iteration is terminated if $\|\nabla E(\bm \rho^n)\|$ is smaller than the tolerance.

\subsection{Algorithm of solution landscape}

The procedure for construction of the solution landscape consists of a downward search algorithm and an upward search algorithm.
Starting from a parent state that is a high-index saddle point, the downward search algorithm is applied to search critical points with lower indices by following its unstable directions.
Assume $\hat{\bm \rho}$ is an index-$m$ saddle point and $\hat{\bm u}_1, \cdots, \hat{\bm u}_m$ are the $m$ orthonormal eigenvectors of $\nabla^2 E(\hat{\bm\rho})$, corresponding to the smallest $m$ eigenvalues $\hat{\lambda}_1< \cdots< \hat{\lambda}_m<0$, respectively.
Then we attempt to search critical points of indices from 0 to $m-1$.
To find an index-$k$ critical point, we first slightly perturb $\hat{\bm\rho}$ along an unstable direction (typically $\hat{\bm u}_{k+1}$).
Then the $k$-HiSD is solved from the initial position $\bm \rho(0)=\hat{\bm \rho}+\varepsilon \hat{\bm u}_{k+1}$ and initial directions $\bm v_i(0)=\hat{\bm u}_i$ for $i=1,\cdots,k$.
Because of the degeneracy of critical points, this $k$-HiSD may converge to a critical point whose index is smaller than $k$, which should be checked after convergence.
An arrow from $\hat{\bm \rho}$ to this critical point is drawn in the solution landscape to illustrate this relationship.
Then this downward search procedure is repeated to the newly-found saddle points, until no more critical points are found.

If the parent state is unavailable, the upward search algorithm can be used to find a high-index critical point starting from a low-index saddle point or a minimum.
Assume $\hat{\bm \rho}$ is an index-$m$ saddle point with $k$ zero eigenvalues, and $\hat{\bm u}_1, \cdots, \hat{\bm u}_{m+k+1}$ are the $(m+k+1)$ orthonormal eigenvectors of $\nabla^2 E(\hat{\bm\rho})$, corresponding to the smallest $(m+k+1)$ eigenvalues $\hat{\lambda}_1< \cdots< \hat{\lambda}_m<0=\hat{\lambda}_{m+1}=\cdots = \hat{\lambda}_{m+k}< \hat{\lambda}_{m+k+1}$, respectively.
To find a saddle point with a higher index, we perturb $\hat{\bm \rho}$ along a stable direction (e.g. $\hat{\bm u}_{m+k+1}$).
Then the $(m+k+1)$-HiSD is solved from the initial position $\bm \rho(0)=\hat{\bm \rho}+\varepsilon \hat{\bm u}_{m+k+1}$ and the initial directions $\bm v_i(0)=\hat{\bm u}_i$ for $i=1,\cdots,m+k+1$.
If the HiSD converges to a high-index saddle point, we can repeat the upward search algorithm starting from this saddle till no higher-index saddle point can be found.
Once the parent state is found, the downward search from it can obtain all connected critical points in the solution landscape.

For the Onsager model \cref{eqn:onsager}, we implement both downward and upward searches from the isotropic state $\bm \rho_0=\bm 1/4\pi$.
The downward search guarantees the systematic finding of critical points connected by the isotropic state.
The upward search from the isotropic state is applied to find possible critical points with higher indices because of saddle-node bifurcations while tracing only the isotropic state could fail to locate such saddle points, for example, the prolate$'$ state for the Maier--Saupe potential at $\kappa=7$ and for the Onsager potential at $\mu=10$ in the next section.

\section{Solution landscape of the Onsager model}\label{sec:results}

\subsection{Dipolar potential}

The simplest kernel of the Onsager model is the dipolar potential $k(t) = -\sigma t$.
$\mathcal{K}$ has an only negative eigenvalue $-\frac{4}{3}\pi\sigma$, with three linearly independent eigenvectors $v(\mathbf{p})=p_i$, ($i=1,2,3$);
all the other eigenvalues are zero (with infinite multiplicity).
Therefore, the isotropic state $\rho_0$ is a minimizer for $\sigma<3$, and a 3-saddle for $\sigma>3$.
From the Euler--Lagrange equation \cref{eqn:onsagerel}, a critical point $\rho$ satisfies,
\begin{equation}\label{eqn:dipolarsp}
  \rho(\mathbf{p}) = \exp\left(\xi + \int_{S^2} -\sigma \mathbf{p}\cdot\mathbf{q} \rho(\mathbf{q})\mathrm{d}\mathbf{q}\right)
  =\mathrm{e}^\xi\exp\left(-\sigma\mathbf{p}\cdot\int_{S^2}\mathbf{q} \rho(\mathbf{q})\mathrm{d}\mathbf{q}\right),
\end{equation}
so all critical points are axisymmetric for the dipolar potential.
In fact, critical points other than the isotropic state are proved to be,
\begin{equation}\label{eqn:dipolarstat}
  \rho(\mathbf{p}) = \frac{1}{Z} \exp(-r \mathbf{p}\cdot\mathbf{m}), \quad \sigma = \frac{r^2}{r \coth r -1},
\end{equation}
where $\mathbf{m}$ is a unit vector and $Z=\int_{S^2}\exp(-r \mathbf{p}\cdot\mathbf{m}) \mathrm{d}\mathbf{p}$ denotes the partition function \cite{fatkullin2005critical}.
This nontrivial critical point exists for $\sigma>3$ as a minimum, which emerges via a pitchfork bifurcation from the isotropic state $\rho_0$, and is referred to as a \emph{nematic} state.

\begin{figure*}[htbp]
\centering
\includegraphics[width=\linewidth]{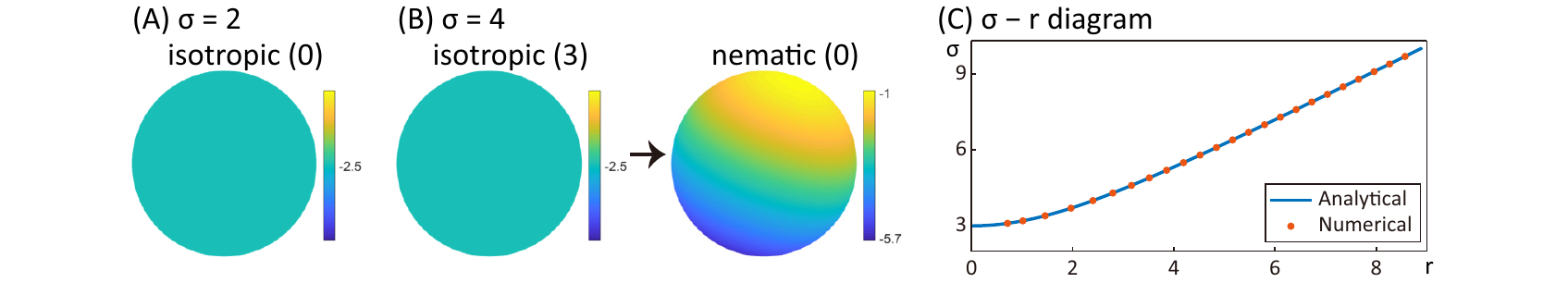}
\caption{Solution landscapes for the dipolar potential at (A) $\sigma=2$, and (B) 4.
(C) Comparison between the analytical and numerical results of $r$ at different $\sigma$ for the dipolar potential.
The critical points in all Figures are plotted with $\log \rho$ and their names are labelled above with the indices in brackets ($\log (1/4\pi) \approx -2.5$).
}
\label{fig:ons1}
\end{figure*}

The solution landscapes at $\sigma=2$ and $4$ are shown in \cref{fig:ons1}(A--B), and the critical points in all Figures are shown with a logarithmic scale for better illustration.
Compared with the analytical results in \cref{eqn:dipolarstat}, the numerical nematic state at $\sigma=4$ has an $L^2$-error of 1.2e-5.
Furthermore, we calculate the nematic states at different $\sigma$ and compare the numerical range of $\frac{1}{2}\log \rho$ with the analytical solution, as shown in \cref{fig:ons1}(C).
The discretization method provides fine accuracy for numerical results.

\subsection{Maier--Saupe potential}

\begin{figure*}[htbp]
\centering
\includegraphics[width=\linewidth]{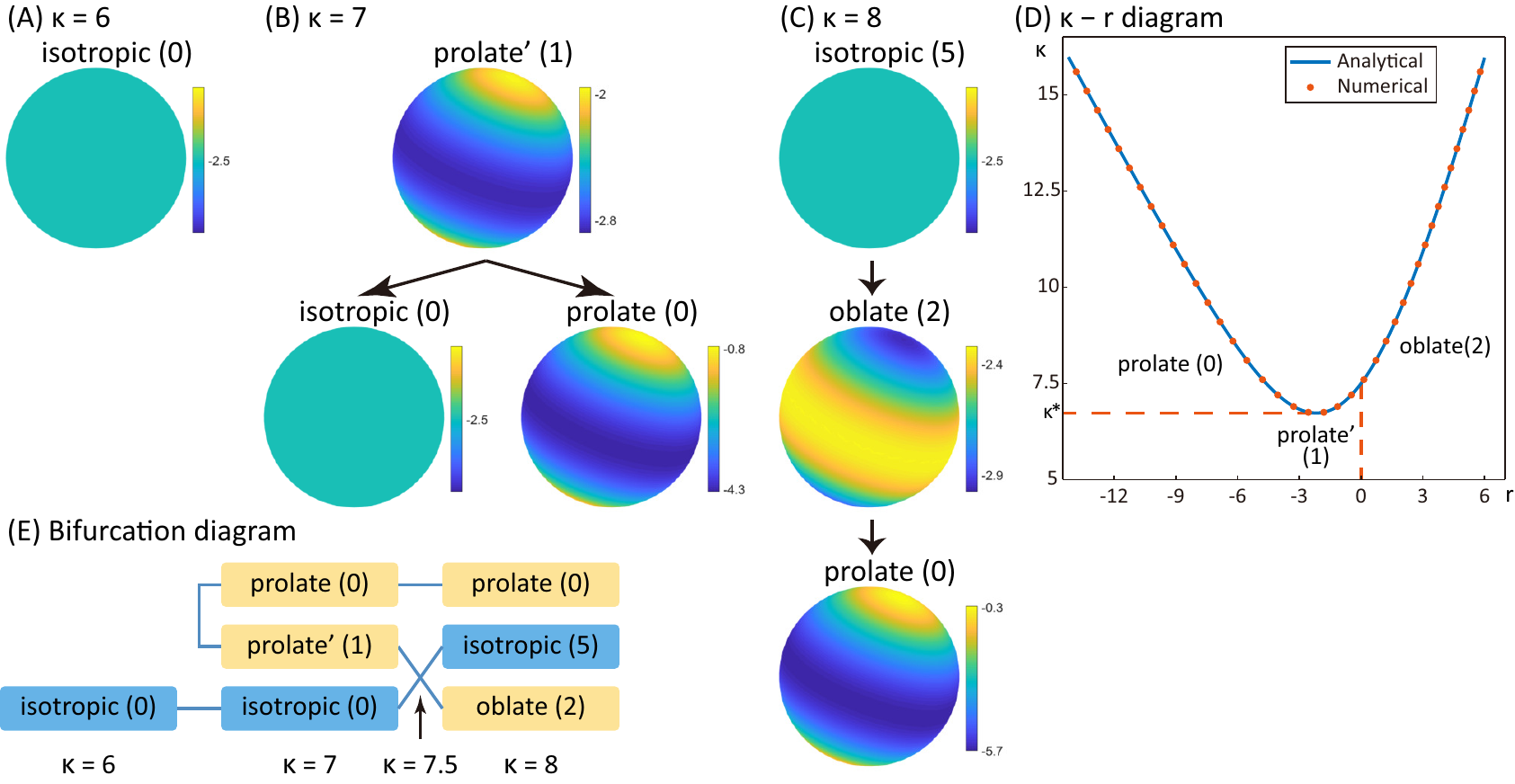}
\caption{Solution landscapes for the Maier--Saupe potential at (A) $\kappa=6$, (B) 7, and (C) 8.
(D) Comparison between the analytical and numerical results of $r$ at different $\kappa$ for the Maier--Saupe potential.
The curve has a tangent line $\kappa=\kappa^*$ separating the prolate and prolate$'$ states, and passes through the bifurcation point $(0,7.5)$ separating the prolate$'$ and oblate states.
(E) The bifurcation diagram for the Maier--Saupe potential.
The index of each state is in the bracket, and the corresponding parameter $\kappa$ is labelled below.
The bifurcation point of the isotropic state is $\kappa=7.5$.
}
\label{fig:ons2}
\end{figure*}

For the Maier--Saupe potential $k(t) = -\kappa t^2$, $\mathcal{K}$ has an only negative eigenvalue $-\frac{8}{15}\pi\kappa$, with five linear independent eigenvectors $v(\mathbf{p})=p_i^2-1/3$ and $v(\mathbf{p})=p_i p_j$.
All the other eigenvalues are zero (with infinite multiplicity).
The solution landscapes for the Maier--Saupe potential are constructed at $\kappa=6,7$ and $8$ in \cref{fig:ons2}(A--C).
The isotropic state $\rho_0$ is a minimizer for $\kappa<7.5$, and a 5-saddle for $\kappa>7.5$.
Moreover, the critical points other than the isotropic state $\rho_0(\mathbf{p})$ are proved to be,
\begin{equation}\label{eqn:msstat}
  \rho(\mathbf{p}) = \frac{1}{Z} \exp\left(-r (\mathbf{p}\cdot\mathbf{m})^2 \right), \quad
  \kappa = 4r^2
  \left(2r- 3+
  \frac{6\sqrt{r}\exp (-r)}{\sqrt{\pi}\operatorname{erf}\sqrt{r}}\right)^{-1},
\end{equation}
where $\mathbf{m}$ is a unit vector \cite{liu2005axial,fatkullin2005critical}.
This nontrivial solution is referred to as a \emph{prolate} state with density concentrating at two antipodal points if $r<0$, and an \emph{oblate} state with density concentrating on a great circle if $r>0$.
Two prolate states emerge at $\kappa^\ast\approx 6.7315$ with $r^\ast \approx -2.18$, as a local minimizer and a 1-saddle (referred to as prolate$'$ to distinguish), respectively.
As $\kappa$ increases, the prolate state becomes the global minimizer in replacement of the isotropic state.
After a transcritical bifurcation at $\kappa=7.5$, the isotropic state $\rho_0$ becomes a 5-saddle, while the oblate state appears as a 2-saddle with $r>0$ \cite{fatkullin2005critical}.
The minimum prolate nematic state remains as a global minimizer (\cref{fig:ons2}(C)), and the bifurcation diagram is shown in \cref{fig:ons2}(E).
Compared with the analytical result \cref{eqn:msstat}, the numerical prolate and oblate states at $\kappa=8$ have $L^2$-errors of 7.0e-4 and 4.1e-5.
We further calculate the prolate($'$) and oblate states at different $\kappa$ and compare the numerical range of $\log \rho$ with the analytical solutions in \cref{fig:ons2}(D) to show the accuracy and consistency of numerical methods.

\subsection{Coupled dipolar/Maier--Saupe potential}

\begin{figure*}[htbp]
\centering
\includegraphics[width=\linewidth]{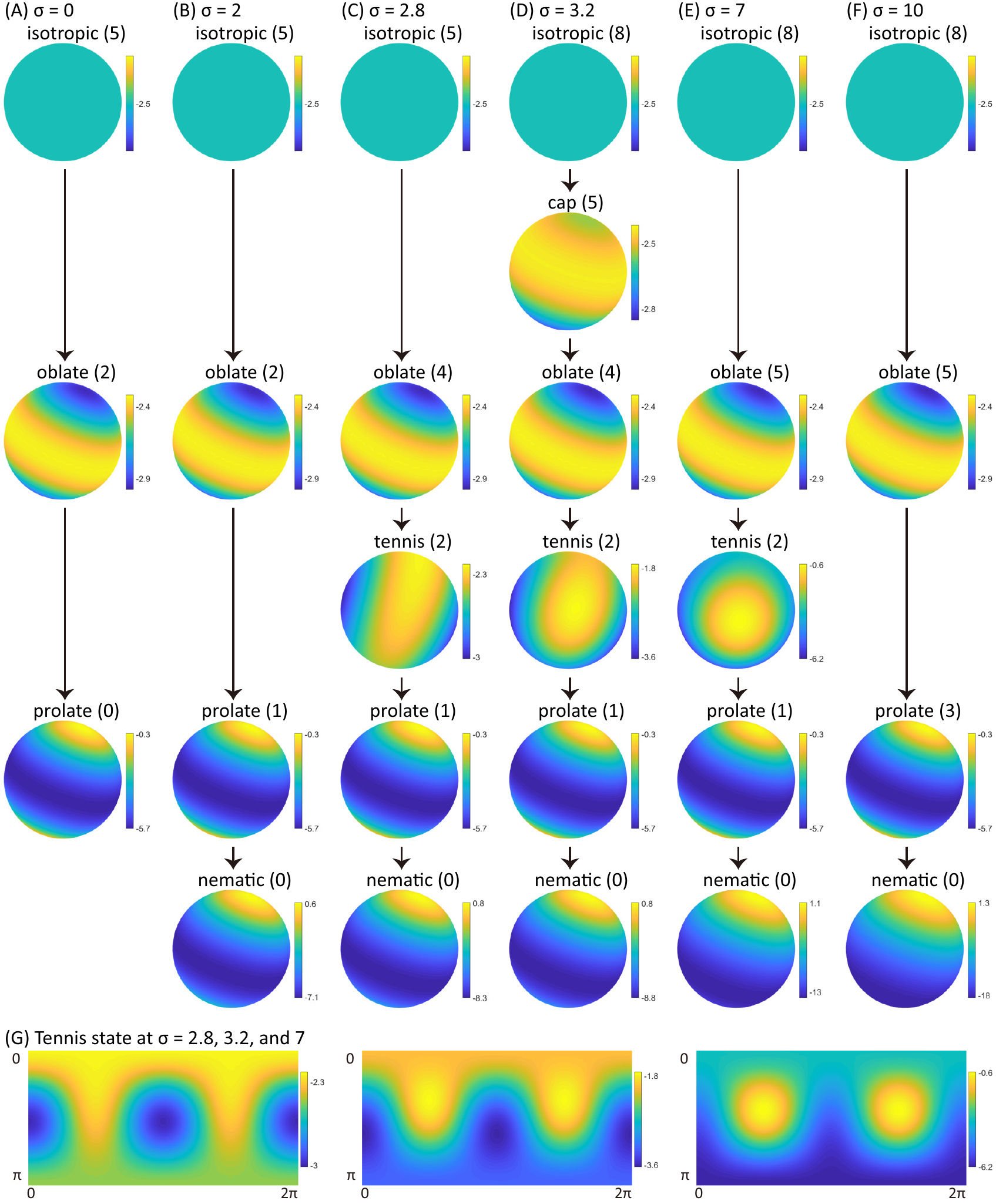}
\caption{Solution landscapes for the coupled dipolar/Maier--Saupe potential at $\kappa=8$ and (A) $\sigma=0$, (B) 2, (C) 2.8, (D) 3.2, (E) 7, and (F) 10.
(G) Expanded two-dimensional plots in spherical coordinates for the tennis state at $\sigma=2.8, 3.2,$ and $7$.
The horizontal axis represents azimuthal angle in $[0,2\pi]$ and the vertical axis represents the polar angle in $[0,\pi]$.
}
\label{fig:ons3}
\end{figure*}

\begin{figure*}[htbp]
\centering
\includegraphics[width=\linewidth]{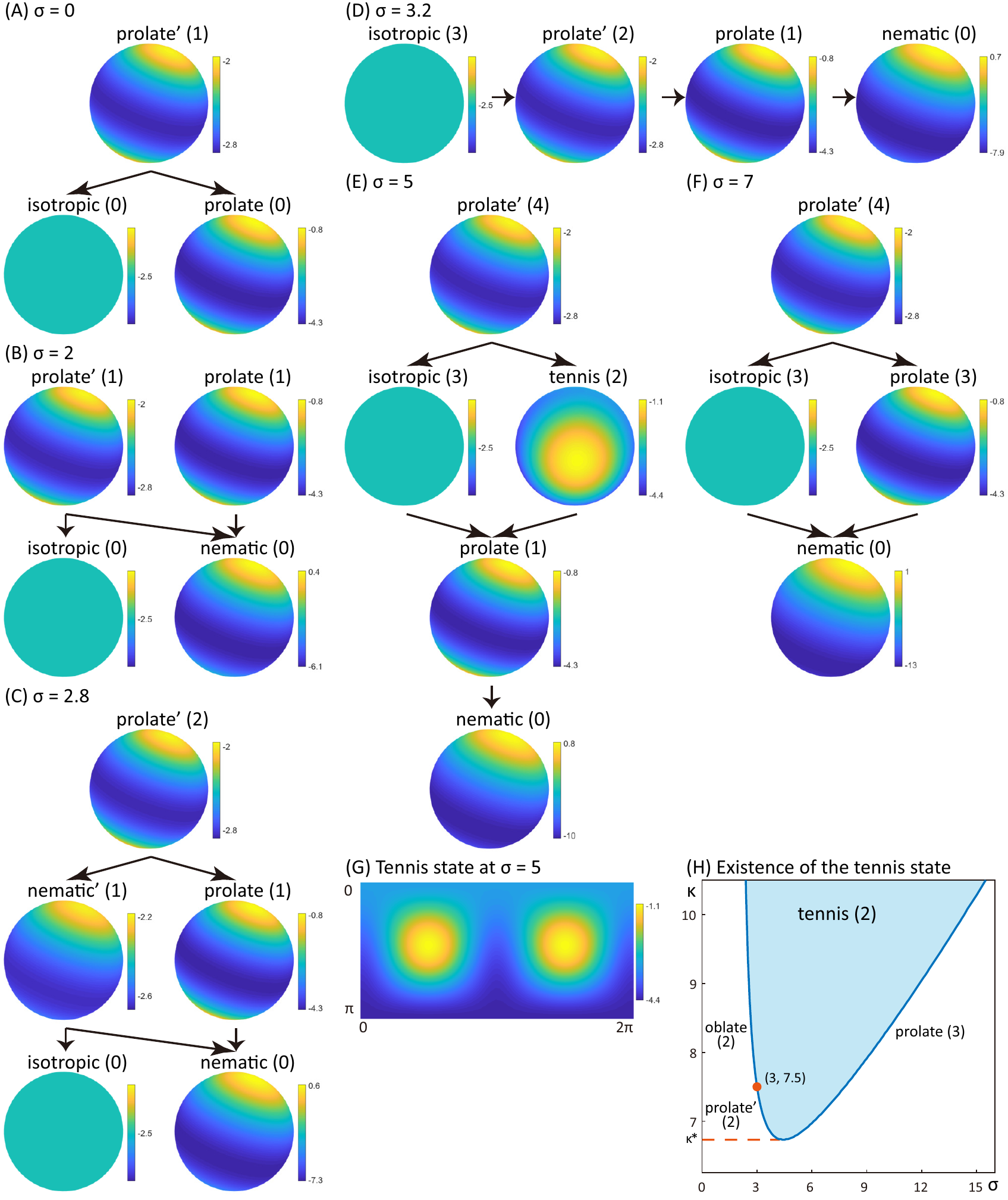}
\caption{Solution landscapes for the coupled dipolar/Maier--Saupe potential at $\kappa=7$ and (A) $\sigma=0$, (B) 2, (C) 2.5, (D) 3.2, (E) 5, and (F) 7.
(G) Expanded two-dimensional plot in spherical coordinates for the tennis state at $\sigma=5$.
(H) Existence region of the non-axisymmetric tennis 2-saddle, with a boundary point $(3, 7.5)$ and a tangent line $\kappa=\kappa^*$ separating bifurcations from the oblate, prolate$'$, and prolate states.
}
\label{fig:ons4}
\end{figure*}

For the coupled dipolar/Maier--Saupe potential $k(t) = -\sigma t -\kappa t^2$, $\mathcal{K}$ has two negative eigenvalue $-\frac{4}{3}\pi\sigma$ and $-\frac{8}{15}\pi\kappa$, and all other eigenvalues are zero (with infinite multiplicity), so the index of the isotropic state $\rho_0$ is not greater than 8.
With simple calculations, it can be seen that for a fixed $\kappa$, critical points at $\sigma=0$ (\emph{i.e.} the Maier--Saupe potential) remain as critical points at $\sigma>0$, while their indices may increase.

We first fix $\kappa=8$ and construct the solution landscapes for the coupled dipolar/Maier--Saupe potential at different $\sigma$ in \cref{fig:ons3}(A--F).
At $\sigma=0$, we recall that there exist a minimum prolate state, a 2-saddle oblate state, and a 5-saddle isotropic state (\cref{fig:ons3}(A)).
At $\sigma=2$, the prolate state becomes a 1-saddle via a pitchfork bifurcation, and a nematic state appears as a global minimizer (\cref{fig:ons3}(B)).
Although this minimizer cannot be expressed in the form of \cref{eqn:dipolarstat}, we refer to it as a nematic state as well, because its density mainly focuses at one pole.
At $\sigma=2.8$, the oblate state becomes a 4-saddle via a pitchfork bifurcation, and a \emph{tennis} state emerges as a 2-saddle with density concentrating around parts of a great circle (\cref{fig:ons3}(C)).
The isotropic state is destabilized as a 8-saddle at $\sigma>3$, and a \emph{cap} state density concentrating on one side emerges as a 5-saddle.
This cap 5-saddle then merges with the oblate 4-saddle, resulting in an oblate 5-saddle.
Finally, the tennis state merges with the prolate state, which leads to a prolate 3-saddle.
The solution landscapes at $\sigma=3.2$, $7$, and $10$ are shown in \cref{fig:ons3}(D--F).
For better illustration, the bifurcation diagram at $\kappa=8$ is shown in \cref{fig:ons8}(A).

Among these anisotropic states, the tennis state is a non-axisymmetric 2-saddle and has the dihedral symmetry $D^2$, that is, in some spherical coordinate $(\theta,\varphi)$ (the azimuthal angle $\theta\in[0,2\pi]$ and the polar angle $\varphi\in[0,\pi]$), the tennis state $\rho$ satisfies,
\begin{equation}\label{eqn:d2}
\rho(\theta,\varphi)=\rho(\theta+\pi,\varphi)=\rho(-\theta,\varphi).
\end{equation}
To illustrate the evolution of the non-axisymmetric state, we plot the tennis state for different $\sigma$ in spherical coordinates in \cref{fig:ons3}(G).
The tennis state emerges from the oblate state, and merges with the prolate state, as shown in \cref{fig:ons8}(A).

Since the solution landscapes are quite different at $\kappa=7$ and $8$ for the Maier--Saupe potential, we further compute the solution landscapes at $\kappa=7$ by varying $\sigma$, as shown in \cref{fig:ons4}(A--F).
Two minima, the prolate state and the isotropic state, are connected by the prolate$'$ 1-saddle at $\sigma=0$, as shown in \cref{fig:ons4}(A).
At $\sigma=2$, a nematic state emerges as a minimizer via a pitchfork bifurcation from the prolate state (\cref{fig:ons4}(B)).
Similarly, a nematic$'$ 1-saddle has been bifurcated from the prolate$'$ state at $\sigma=2.8$ (\cref{fig:ons4}(C)), and then merges into the isotropic state at $\sigma=3$, leading to an isotropic 3-saddle.
At $\sigma=5$, a non-axisymmetric tennis state has emerged as a 2-saddle via a pitchfork bifurcation from the prolate$'$ state, and then merges with the prolate state at $\sigma=7$.
The solution landscapes at $\sigma=3.2$, $5$, and $7$ are shown in \cref{fig:ons4}(D--F), and the tennis state at $\sigma=5$ is plotted in \cref{fig:ons4}(G) in the spherical coordinate.
The bifurcation diagram is shown in \cref{fig:ons8}(B).

\begin{figure*}[htbp]
\centering
\includegraphics[width=\linewidth]{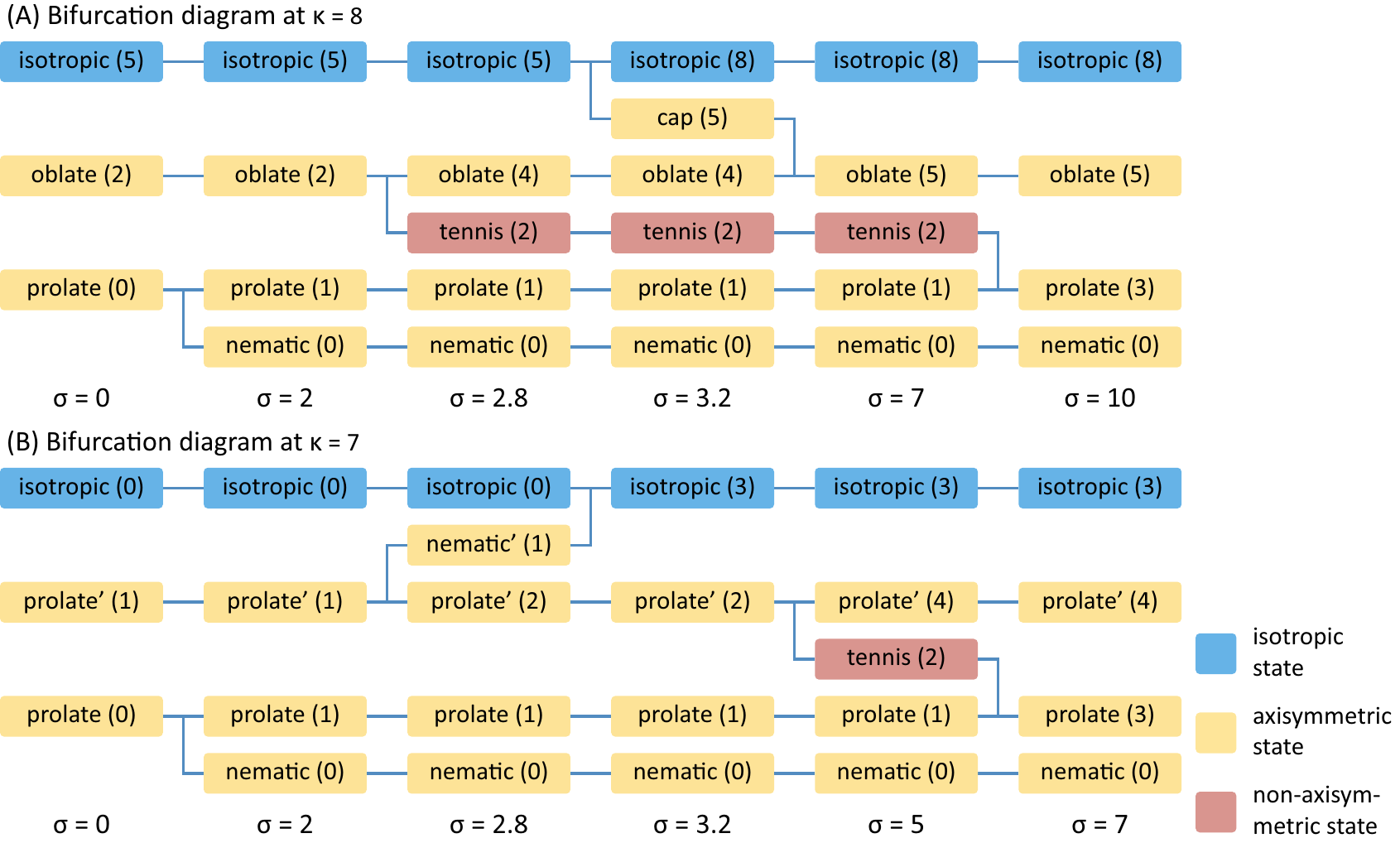}
\caption{
The bifurcation diagrams for the coupled dipolar/Maier--Saupe potential at (A) $\kappa=8$, and (B) $\kappa=7$.
The color of each node indicates its axisymmetric property with its index in the bracket, and the corresponding parameter $\sigma$ is labelled below.
}
\label{fig:ons8}
\end{figure*}

In \cref{fig:ons4}(H), we plot the existence region of the non-axisymmetric tennis state in the parameter space.
The tennis state exists only at $\kappa>\kappa^*$, where $\kappa^*\approx 6.7315$ is the critical $\kappa$ for the existence of anisotropic states for the Maier--Saupe potential.
By increasing $\sigma$, the tennis state is a 2-saddle bifurcated from the prolate$'$ 2-saddle (for $\kappa^*<\kappa<7.5$) or the oblate 2-saddle (for $\kappa>7.5$), and finally merges into the prolate 3-saddle.
The bifurcation point $(\sigma=3, \kappa=7.5)$ of the isotropic state is on the boundary of the existence region of the tennis state.
Based on the numerical findings, we have the following conjectures for the coupled dipolar/Maier--Saupe potential.

\begin{conjecture}
  All the critical points have the dihedral symmetry $D^2$.
\end{conjecture}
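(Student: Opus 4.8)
\emph{Proof proposal.} The plan is to reduce the statement to a finite-dimensional self-consistency condition and then settle it by an elementary sign argument. The key point is the very special form of the kernel, $k(t)=-\sigma t-\kappa t^2$. For any density $\rho$ one has $\int_{S^2}(\mathbf p\cdot\mathbf q)^2\rho(\mathbf q)\,\mathrm d\mathbf q=\mathbf p^\top\mathbf M\mathbf p$ with $\mathbf M=\int_{S^2}\mathbf q\mathbf q^\top\rho(\mathbf q)\,\mathrm d\mathbf q$, and $\int_{S^2}(\mathbf p\cdot\mathbf q)\rho(\mathbf q)\,\mathrm d\mathbf q=\mathbf p\cdot\mathbf b$ with $\mathbf b=\int_{S^2}\mathbf q\,\rho(\mathbf q)\,\mathrm d\mathbf q$, so $\mathcal K\rho(\mathbf p)=-\sigma\,\mathbf p\cdot\mathbf b-\kappa\,\mathbf p^\top\mathbf M\mathbf p$. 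Since every critical point is smooth, bounded away from zero, and solves \cref{eqn:onsagerel} \cite{ball2021axisymmetry}, it must have the form
\begin{equation}
\rho(\mathbf p)=\frac1Z\exp\!\big(\sigma\,\mathbf p\cdot\mathbf b+\kappa\,\mathbf p^\top\mathbf M\mathbf p\big),
\end{equation}
with the vector $\mathbf b$ and the symmetric matrix $\mathbf M$ determined self-consistently as the first and second moments of this exponential; this is precisely the finite-dimensional reduction described in \ref{apd:a}.

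The next step is to diagonalize. By the rotational invariance \cref{eqn:rotationalsymmetry} I may apply an $\mathbf R\in O(3)$ so that $\mathbf M=\operatorname{diag}(\mu_1,\mu_2,\mu_3)$, whence $\rho(\mathbf p)=Z^{-1}\exp(\sigma\sum_i b_ip_i+\kappa\sum_i\mu_ip_i^2)$. The quadratic part is invariant under each reflection $p_i\mapsto-p_i$ separately, so it suffices to show that at most one component of $\mathbf b$ is nonzero, i.e. that $\mathbf b$ is an eigenvector of $\mathbf M$: then, relabelling so that $b_1=b_2=0$, the density is invariant under $p_1\mapsto-p_1$ and $p_2\mapsto-p_2$, which in spherical coordinates $(\theta,\varphi)$ about the $p_3$-axis are the maps $\theta\mapsto\pi-\theta$ and $\theta\mapsto-\theta$; together these generate exactly the $D^2$ action, so \cref{eqn:d2} holds.

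The heart of the matter is the sign claim. Because $\mathbf M$ is diagonal, $\int_{S^2}p_1p_2\,\rho(\mathbf p)\,\mathrm d\mathbf p=0$, and likewise for the pairs $(1,3)$ and $(2,3)$. Writing $\rho(\mathbf p)\propto e^{\sigma b_1p_1}e^{\sigma b_2p_2}\,g(\mathbf p)$ with $g(\mathbf p)=\exp(\sigma b_3p_3+\kappa\sum_m\mu_mp_m^2)$ even in $p_1$ and in $p_2$, and folding the integral over the group generated by $p_1\mapsto-p_1$ and $p_2\mapsto-p_2$ (which preserves the surface measure), one obtains
\begin{equation}
0=\int_{S^2}p_1p_2\,\rho(\mathbf p)\,\mathrm d\mathbf p=\frac1Z\int_{S^2}\big[p_1\sinh(\sigma b_1p_1)\big]\big[p_2\sinh(\sigma b_2p_2)\big]\,g(\mathbf p)\,\mathrm d\mathbf p .
\end{equation}
Since $\sigma>0$, each factor $p_i\sinh(\sigma b_ip_i)$ has the constant sign $\operatorname{sign}(b_i)$ and vanishes only on the null set $\{p_i=0\}$, while $g>0$ everywhere; hence if $b_1b_2\neq0$ the integrand has a fixed, a.e.-nonzero sign and the integral cannot vanish, a contradiction. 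Therefore $b_1b_2=0$, and the same with the other two pairs gives $b_ib_j=0$ for all $i\neq j$, so at most one $b_i$ is nonzero. Combined with the previous paragraph, this proves the conjecture.

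I expect the only points requiring care to be bookkeeping rather than substance. The first is invoking \cite{ball2021axisymmetry} to guarantee that the Euler--Lagrange equation holds pointwise and that the moments $\mathbf b,\mathbf M$ are finite, so that the reduction to the exponential form is legitimate. The second is the degenerate spectrum of $\mathbf M$: if two of the $\mu_m$ coincide one should first rotate inside the degenerate eigenplane to align $\mathbf b$ before running the sign argument, and if $\mathbf M$ is a multiple of the identity the quadratic term is constant on $S^2$ and self-consistency already forces $\mathbf b=0$. No deeper obstruction seems to be present: the whole mechanism is that a dipolar weight $e^{\sigma b_ip_i}$ with $b_i\neq0$ produces a strictly signed pair correlation $\langle p_ip_j\rangle$ unless $b_j=0$.
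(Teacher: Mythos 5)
You should first note a mismatch of expectations: the paper does not prove this statement at all --- it is posed as Conjecture 3.1, supported only by the numerically computed solution landscapes and the existence region of the tennis state --- so there is no ``paper proof'' to compare against, and your proposal, if written up carefully, would go strictly beyond the paper. Having checked it, I find the argument sound. The Euler--Lagrange equation \cref{eqn:onsagerel} with $k(t)=-\sigma t-\kappa t^2$ does force $\rho=Z^{-1}\exp\left(\sigma\,\mathbf p\cdot\mathbf b+\kappa\,\mathbf p^{\top}\mathbf M\mathbf p\right)$ with $\mathbf b$ and $\mathbf M$ the first and second moments of $\rho$ itself; this is the same exponential family the paper reaches in \ref{apd:a} via the annihilator $(\Delta+2)(\Delta+6)$, but the appendix keeps all three linear coefficients in \cref{eqn:fmphi5} and never addresses their alignment, which is exactly where the conjecture lives. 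Your extra ingredient is the self-consistency observation that the matrix in the exponent is the second-moment tensor of $\rho$, so in its eigenframe $\int_{S^2}p_ip_j\rho\,\mathrm d\mathbf p=0$ for $i\neq j$; the folding identity $\sum_{\epsilon_1,\epsilon_2=\pm1}\epsilon_1\epsilon_2e^{a\epsilon_1+b\epsilon_2}=4\sinh a\sinh b$ and the strict sign of $p_i\sinh(\sigma b_ip_i)$ then force $b_ib_j=0$ for each pair, i.e.\ $\mathbf b$ lies along a principal axis of $\mathbf M$, and evenness in the two remaining coordinates gives \cref{eqn:d2} about that axis. Only cosmetic points need tightening: the degenerate-spectrum discussion is unnecessary (the sign argument works in any orthonormal eigenbasis of $\mathbf M$, and if $\mathbf M$ is a multiple of the identity the conclusion follows from the frame in which $\mathbf b$ is a coordinate axis); the case $\sigma=0$ or $\mathbf b=0$ should be dispatched separately but trivially, since then $\rho$ is even in each eigenframe coordinate; and you should state explicitly that the $D^2$ frame produced is the one aligned with the principal axes, which is all \cref{eqn:d2} requires. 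Note that your argument proves Conjecture 3.1 but says nothing about Conjectures 3.2--3.3 (uniqueness and index of the non-axisymmetric branch), which remain open.
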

\begin{conjecture}
  There exists at most one non-axisymmetric critical point (regardless of the rotation) for each parameter $(\sigma_0, \kappa_0)$, which is an index-2 critical point.
\end{conjecture}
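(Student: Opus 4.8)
The plan is to collapse the infinite–dimensional problem to the finite–dimensional self‑consistency system that the kernel $k(t)=-\sigma t-\kappa t^2$ permits. Since the interaction term in \cref{eqn:onsagerel} only sees the first two moments of $\rho$, setting $\mathbf{b}=\int_{S^2}\mathbf{q}\,\rho(\mathbf{q})\,\mathrm{d}\mathbf{q}$ and $\mathbf{Q}=\int_{S^2}\mathbf{q}\mathbf{q}^{\top}\rho(\mathbf{q})\,\mathrm{d}\mathbf{q}$ (so $\operatorname{tr}\mathbf{Q}=1$), every critical point must have the Gibbs form
\[
\rho_{\mathbf{b},\mathbf{Q}}(\mathbf{p})=\frac{1}{Z(\mathbf{b},\mathbf{Q})}\exp\!\bigl(\sigma\,\mathbf{b}\cdot\mathbf{p}+\kappa\,\mathbf{p}^{\top}\mathbf{Q}\,\mathbf{p}\bigr),
\]
with $(\mathbf{b},\mathbf{Q})$ a fixed point of the moment map $\mathcal{M}_{\sigma,\kappa}:(\mathbf{b},\mathbf{Q})\mapsto(\langle\mathbf{p}\rangle,\langle\mathbf{p}\mathbf{p}^{\top}\rangle)$, expectations taken under $\rho_{\mathbf{b},\mathbf{Q}}$ (this is the reduction of \ref{apd:a}). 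Using the $O(3)$ invariance \cref{eqn:rotationalsymmetry}, diagonalize $\mathbf{Q}=\operatorname{diag}(q_1,q_2,q_3)$. The first step is to show that the $\mathbf{b}$‑component of the fixed‑point equation forces $\mathbf{b}$ to be an eigenvector of $\mathbf{Q}$ (possibly zero); this already yields Conjecture~1, because $\rho_{\mathbf{b},\mathbf{Q}}$ with $\mathbf{b}=b\,\mathbf{e}_1$ and $\mathbf{Q}$ diagonal is invariant under the order‑four group generated by $p_2\mapsto-p_2$ and $p_3\mapsto-p_3$, i.e. has dihedral symmetry $D^2$. I expect this step to go through by the correlation/log‑convexity argument underlying \cite{zhou2007characterization}: pairing the moment identity with an off‑axis direction and using that the measure $\rho_{\mathbf{b},\mathbf{Q}}$ on $S^2$ cannot support the corresponding mixed correlation unless $\mathbf{b}$ aligns.

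Once $\mathbf{b}$ is axial, $\rho_{\mathbf{b},\mathbf{Q}}$ is non‑axisymmetric exactly when the two eigenvalues of $\mathbf{Q}$ transverse to $\mathbf{b}$ are distinct (all three distinct when $\mathbf{b}=0$). I would then reduce the system to a \emph{biaxial} self‑consistency equation for the single order parameter $\beta:=q_2-q_3$, after using the remaining scalar equations to eliminate $q_1+q_2+q_3=1$, $|\mathbf{b}|$, and $\langle p_1^2\rangle$, obtaining a scalar map $\beta\mapsto G_{\sigma,\kappa}(\beta)$ whose nonzero fixed points are precisely the non‑axisymmetric critical points, counted without rotation. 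The crux is then: \emph{$G_{\sigma,\kappa}$ has at most one fixed point with $\beta\neq0$, and there $G_{\sigma,\kappa}'\neq1$.} I would attack this by exhibiting a monotone structure — $\log Z(\mathbf{b},\mathbf{Q})$ is convex in $(\mathbf{b},\mathbf{Q})$, so $\mathcal{M}_{\sigma,\kappa}$ is a gradient map and one hopes the biaxial branch of $\{\mathcal{M}_{\sigma,\kappa}=\mathrm{id}\}$ is parametrized monotonically and meets the constraint surface once — or, as an alternative, by a global bifurcation argument: the numerics of \cref{fig:ons3,fig:ons4} show the tennis branch born by a single symmetry‑breaking pitchfork from the prolate$'$/oblate $2$‑saddle and dying by merging into the prolate $3$‑saddle with no intermediate folds, so proving that $\partial_\beta(\mathcal{M}_{\sigma,\kappa}-\mathrm{id})$ never degenerates along this branch for $\kappa>\kappa^*$ would give both uniqueness and, by continuation, constancy of the Morse index.

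For the index claim, I would compute the Hessian \cref{eqn:onsager2v} at $\rho_{\mathbf{b},\mathbf{Q}}$; since any critical point is smooth and bounded away from zero, $\nabla^2E=\mathcal{P}(\rho^{-1}+\mathcal{K})\mathcal{P}$ has only finitely many non‑positive eigenvalues, and its negative inertia coincides, away from the $3$‑dimensional rotational kernel, with that of the reduced Hessian on the finite‑dimensional manifold of admissible Gibbs densities. The branch being fold‑free by the previous step, the number of negative eigenvalues cannot change along it; matching with a direct eigenvalue count at the birth of the tennis state from the prolate$'$/oblate $2$‑saddle (or with the numerically certified value at an interior $\sigma$) then pins this number at $2$.

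\textbf{Main obstacle.} The genuinely hard step is the uniqueness of the biaxial fixed point. In the dipolar and Maier--Saupe cases the reduced equation is one‑dimensional and yields to elementary calculus (cf. \cref{eqn:dipolarstat,eqn:msstat}), but the coupled biaxial system is intrinsically two‑dimensional, and the exponential‑of‑quadratic measures on $S^2$ do not obviously obey a correlation inequality in the biaxial direction; there is no a priori reason $\mathcal{M}_{\sigma,\kappa}$ is globally monotone. A complete proof would likely need either a substitution that collapses the biaxial branch to one dimension, or a quantitative non‑degeneracy bound on $\partial_\beta(\mathcal{M}_{\sigma,\kappa}-\mathrm{id})$ uniform along the whole branch and over the relevant range of $(\sigma,\kappa)$ — which is exactly why the statement is offered as a conjecture rather than a theorem.
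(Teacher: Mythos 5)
This statement is one of the paper's \emph{conjectures}: the paper offers no proof of it, only numerical evidence (the solution landscapes of \cref{fig:ons3,fig:ons4}, the bifurcation diagrams of \cref{fig:ons8}, and the existence region in \cref{fig:ons4}(H)), so there is no argument of the authors to compare yours against. Your reduction framework is sound and is essentially the paper's own reduced model of \ref{apd:a}: critical points of the coupled dipolar/Maier--Saupe kernel do have the Gibbs form determined by the first and second moments, and after an orthogonal change of coordinates the problem becomes finite-dimensional. But as a proof the proposal has genuine gaps beyond the one you flag. First, your opening step --- that the fixed-point equation forces $\mathbf b$ to be an eigenvector of $\mathbf Q$ --- is not something that follows from \cite{zhou2007characterization}, which establishes axisymmetry only for \emph{stable} critical points; that alignment statement is essentially Conjecture~1 of the paper and is itself open, so you are resting an open conjecture on another one.

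Second, the index argument is flawed in two concrete ways. Constancy of the Morse index along the tennis branch requires more than the absence of folds: a Hessian eigenvalue can cross zero at a secondary (symmetry-breaking or transcritical) bifurcation along the branch without any degeneration of $\partial_\beta(\mathcal M_{\sigma,\kappa}-\mathrm{id})$ in your reduced variable, so non-degeneracy of the scalar reduced map does not pin the full index. More seriously, your identification of the negative inertia of $\nabla^2E=\mathcal P(\rho^{-1}+\mathcal K)\mathcal P$ with that of the reduced Hessian ``away from the rotational kernel'' contradicts the paper's own caveat in \ref{apd:a}: a critical point may have strictly lower index in the reduced model, because directions transverse to the manifold of Gibbs densities can carry additional negative Hessian eigenvalues; only the inequality ``full index $\geqslant$ reduced index'' is available. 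So even granting uniqueness of the biaxial fixed point, the claim that the non-axisymmetric critical point has index exactly $2$ would not follow from the reduced computation without a separate control of the transverse spectrum. In short, the proposal is a reasonable research program consistent with the paper's reduction, but all three of its load-bearing steps (alignment of $\mathbf b$, uniqueness of the biaxial fixed point, and the index count) remain unproven, which is precisely why the paper states this as a conjecture.
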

\begin{conjecture}
  If a non-axisymmetric critical point exists for $(\sigma_0, \kappa_0)$, then that also exists for $(\sigma_0, \kappa>\kappa_0)$.
\end{conjecture}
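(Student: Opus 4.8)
A possible route to a proof uses the standard finite-dimensional reduction of the coupled model (cf.\ \ref{apd:a} and \cite{zhou2007characterization}): the Euler--Lagrange equation \cref{eqn:onsagerel} forces every critical point of \cref{eqn:onsager} for $k(t)=-\sigma t-\kappa t^{2}$ to be of the form $\rho = Z^{-1}\exp(\sigma\,\mathbf p\cdot\mathbf m + \kappa\,\mathbf p^{\top}Q\,\mathbf p)$ with $\mathbf m = \int_{S^{2}}\mathbf q\,\rho(\mathbf q)\,\mathrm d\mathbf q$ and $Q = \int_{S^{2}}(\mathbf q\mathbf q^{\top}-\tfrac13 I)\rho(\mathbf q)\,\mathrm d\mathbf q$, and conversely every pair $(\mathbf m,Q)$ solving these self-consistency relations yields such a critical point. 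Thus, modulo $O(3)$, critical points are exactly the zeros of a smooth finite-dimensional map $G_{\sigma,\kappa}$; diagonalising $Q=\operatorname{diag}(q_{1},q_{2},q_{3})$ and using the symmetry \cref{eqn:d2} to place $\mathbf m$ along a principal axis, a tennis state is a zero of a system of three scalar equations in $(q_{1},q_{2},m_{3})$ with $q_{1},q_{2},q_{3}$ pairwise distinct, and, by the principle of symmetric criticality, every such zero is a genuine non-axisymmetric critical point of \cref{eqn:onsager}. Since the tennis critical point is non-degenerate (its reduced Hessian has no zero eigenvalue), the implicit function theorem continues the tennis zero at $(\sigma_{0},\kappa_{0})$ to a $C^{1}$ branch $\kappa\mapsto(q_{1}(\kappa),q_{2}(\kappa),m_{3}(\kappa))$ for $\kappa$ near $\kappa_{0}$, and everything then reduces to extending the existence of a solution to all of $[\kappa_{0},\infty)$, the existence of \emph{some} non-axisymmetric critical point for this kernel at suitable parameters being already known \cite{zhou2007characterization}.

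The plan is a global continuation / Brouwer-degree argument in the parameter $\kappa$. The crucial preliminary is a set of a priori estimates: for $\sigma=\sigma_{0}$ fixed and $\kappa$ in an arbitrary compact subinterval of $[\kappa_{0},\infty)$, one shows that every tennis zero of $G_{\sigma_{0},\kappa}$ satisfies $|Q|+|\mathbf m|\le C$ (from boundedness of the moment integrals) and stays a uniform distance $\delta>0$ from the stratum $\{q_{i}=q_{j}\ \text{for some}\ i\ne j\}$, on which the state collapses to an axisymmetric one. This automatically keeps $m_{3}$ bounded away from $0$ as well, since a tennis zero with $m_{3}=0$ would be a biaxial critical point of the pure Maier--Saupe kernel, which is impossible by \cite{fatkullin2005critical,liu2005axial}; this is exactly the mechanism by which, in \cref{fig:ons8} and \cref{fig:ons4}(H), the tennis branch meets axisymmetric branches only at its \emph{lower} endpoint. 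On the bounded open set $U_{\delta}$ carved out by these inequalities, $G_{\sigma_{0},\kappa}$ has no zeros on $\partial U_{\delta}$ and its only interior zeros are tennis zeros, so $\deg(G_{\sigma_{0},\kappa},U_{\delta},0)$ is well defined and, by homotopy invariance in $\kappa$ (which is legitimate precisely because of the uniform estimate above), is independent of $\kappa\ge\kappa_{0}$. At $\kappa_{0}$ this degree is nonzero: under the uniqueness asserted in the preceding conjecture it equals $\pm1$, and one moreover expects it to equal $+1$ unconditionally, the tennis state being plausibly the minimizer of the free energy among $D^{2}$-symmetric states (equivalently, the two unstable modes of the index-$2$ tennis state are the two symmetry-breaking directions, so its reduced Morse index is $0$). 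A nonzero degree for every $\kappa\ge\kappa_{0}$ then forces a tennis zero — hence a non-axisymmetric critical point — to exist throughout $[\kappa_{0},\infty)$.

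The one ingredient I expect to be genuinely hard is closing the a priori estimates in the regime $\kappa\to\infty$ with $\sigma_{0}$ fixed: there $|Q|$ must be shown to stay bounded and no gap among $q_{1}<q_{2}<q_{3}$ may close. In that concentration limit the self-consistency integrals have to be evaluated by Laplace asymptotics, and controlling the three competing principal values is exactly the kind of delicate transcendental analysis that had to be performed, with considerable effort, even for the much simpler pure dipolar and pure Maier--Saupe kernels in \cite{fatkullin2005critical,liu2005axial}. A cleaner alternative that would bypass the limit entirely is a monotonicity estimate — proving that some scalar order parameter built from $(q_{1},q_{2},m_{3})$ is strictly increasing in $\kappa$ along the tennis branch, which simultaneously rules out fold points above $\kappa_{0}$ and gives the conjecture directly — but establishing such monotonicity for the coupled kernel does not look routine either, and this is where I would expect the bulk of the work to lie.
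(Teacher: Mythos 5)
Be aware first that the paper contains no proof of this statement: it is advanced purely as a conjecture motivated by the computed existence region of the tennis state (\cref{fig:ons4}(H)) and the bifurcation diagrams (\cref{fig:ons8}), so there is no argument of the authors to compare yours against, and your proposal has to stand on its own. Your starting point is sound: for the coupled kernel the Euler--Lagrange equation does force the Boltzmann form with self-consistent moments $(\mathbf m,Q)$, which is the exact analytic counterpart of the reduction in \ref{apd:a}, and local continuation of a \emph{nondegenerate} tennis zero by the implicit function theorem is unobjectionable. But the two load-bearing steps of the global argument are missing. The homotopy invariance of the degree requires the a priori estimate that, uniformly for $\kappa$ in $[\kappa_0,K]$, every non-axisymmetric zero stays a fixed distance $\delta$ from the axisymmetric strata $\{q_i=q_j\}$ and from infinity; that separation estimate is essentially equivalent to the conjecture itself, because merging into an axisymmetric branch is exactly the termination mechanism this model exhibits (at fixed $\kappa$ the tennis state merges into the prolate $3$-saddle as $\sigma$ grows, \cref{fig:ons8}), and your $m_3=0$ argument only rules out termination with vanishing first moment, not the closing of a gap among $q_1,q_2,q_3$ at $\mathbf m\neq 0$. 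You concede this yourself, which makes the proposal a research program rather than a proof.

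The second gap is the normalization of the degree at $\kappa_0$. You obtain $\pm1$ only by invoking the preceding uniqueness conjecture, which is itself unproven (so the argument is conditional), and the unconditional claim that the degree equals $+1$ rests on the guess that the symmetry-reduced Morse index of the tennis state is $0$; that does not follow from its index being $2$ in the full model without an explicit accounting of the three $O(3)$ zero modes against the reduced variables, and the nondegeneracy of the reduced Hessian at $(\sigma_0,\kappa_0)$ is asserted rather than proved. Moreover the hypothesis of the conjecture allows $(\sigma_0,\kappa_0)$ to be \emph{any} point at which a non-axisymmetric critical point exists, including points on the boundary of the existence region where the tennis zero is born degenerate at a pitchfork; there the implicit function theorem is unavailable and the local degree can be zero, so even granting the a priori bounds the conclusion would not follow for such starting points. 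In short, the strategy (exact finite-dimensional reduction plus global continuation/degree) is a reasonable one, but the compactness/separation estimate as $\kappa\to\infty$, the degree normalization, and the treatment of degenerate starting parameters are genuine gaps, and none of them can be borrowed from the paper, which offers only numerical evidence for this statement.
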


\subsection{Onsager potential}
\begin{figure*}[htbp]
\centering
\includegraphics[width=\linewidth]{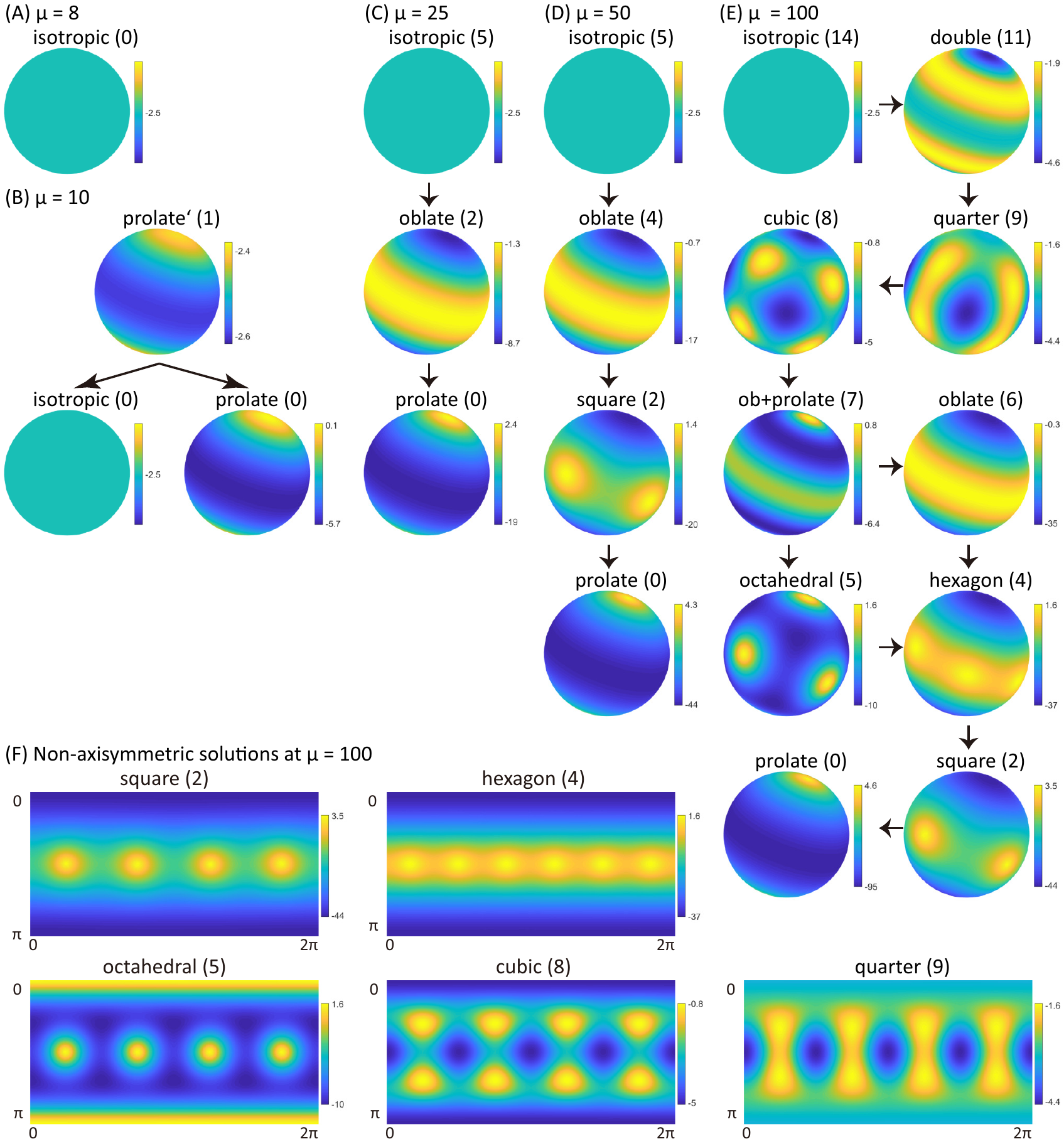}
\caption{Solution landscapes for the Onsager potential at (A) $\mu=8$, (B) 10, (C) 25, (D) 50, and (E) 100.
(F) Expanded two-dimensional plots in spherical coordinates for the non-axisymmetric solutions at $\mu=100$.
The horizontal axis represents azimuthal angle in $[0,2\pi]$ and the vertical axis represents the polar angle in $[0,\pi]$.
}
\label{fig:ons5}
\end{figure*}

For the Onsager potential $k(t) = \mu \sqrt{1-t^2}$, $\mathcal{K}$ has an infinite number of negative eigenvalues as $-\frac{1}{2} \Gamma(s+1)^{-1} \Gamma(s+2)^{-1} \Gamma(s-\frac12) \Gamma(s+\frac12)\pi\mu$ (with multiplicity $4s+1$, $s\in \mathbb{N}_+$) \cite{vollmer2017critical}.
All the other eigenvalues are zero (with infinite multiplicity) with odd functions as the corresponding eigenvectors.
The bifurcation points for the isotropic state $\rho_0$ are $\mu_s = 8\Gamma(s-\frac12)^{-1} \Gamma(s+\frac12)^{-1}  \Gamma(s+1) \Gamma(s+2)$, $s\in \mathbb{N}_+$, and the first three bifurcation points are $\mu_1=32/\pi\approx 10.2$, $\mu_2=256/\pi\approx 81.5$, and $\mu_3=4096/5\pi \approx261$.
As predicted in \cite{ball2021axisymmetry}, non-axisymmetric critical points with cubic symmetry can exist with lower energy than $E(\rho_0)$ if $\mu>\mu_2$.

For sufficiently small $\mu$ (e.g. $\mu=8$), the isotropic state is the only critical point and the global minimizer (\cref{fig:ons5}(A)).
At $\mu=10$, two prolate states have emerged as a minimizer and a 1-saddle (referred to as prolate$'$) respectively via a saddle-node bifurcation, while the isotropic state remains as a minimizer.
The prolate$'$ 1-saddle can be found by upward search from the isotropic state.
After $\mu>\mu_1$, the isotropic state loses its local stability and becomes a 5-saddle, while the 1-saddle prolate$'$ state becomes a 2-saddle oblate state via a transcritical bifurcation, as proved in \cite{vollmer2017critical} and obtained numerically in \cite{kayser1978bifurcation,gopinath2004observations}.
This transcritical bifurcation bears a resemblance to the bifurcation at $\kappa=7.5$ for the Maier--Saupe potential.
It should be noted that these prolate and oblate states are axisymmetric critical points named for their qualitative features and cannot be expressed in the form of \cref{eqn:msstat}.
The solution landscape at $\mu=25$ is shown in \cref{fig:ons5}(C).

At $\mu=50$, the oblate state becomes a 4-saddle via a pitchfork bifurcation, and a \emph{square} state exists as a 2-saddle, with density concentrating at four vertices of a square on a great circle (\cref{fig:ons5}(D)).
The square state with the square prismatic symmetry $D^{4\mathrm{h}}$ is a non-axisymmetric solution which emerges first as $\mu$ increases.
Here, $\rho$ is said to be with prismatic dihedral symmetry $D^{2n\mathrm{h}}$ (order $8n$), if in some spherical coordinate $(\theta,\varphi)$, $\rho$ satisfies
\begin{equation}\label{eqn:dnh}
\rho(\theta,\varphi)=\rho(\theta+\pi/n,\varphi)=\rho(-\theta,\varphi)=\rho(\theta,-\varphi).
\end{equation}
For a larger $\mu$, the oblate state becomes a 6-saddle via another pitchfork bifurcation, and a \emph{hexagon} state exists as a 4-saddle with $D^{6\mathrm{h}}$ symmetry.
It is a non-axisymmetric solution with density concentrating at six vertices of a regular hexagon on a great circle.
It is noted that for a two-dimensional Onsager free-energy model with the Onsager potential, a critical point with four (or six) fold symmetry was also obtained with probability density concentrating on four (or six) directions uniformly distributed on the unit circle \cite{wang2008multiple,wang2010phase}.

Finally we show the solution landscape at $\mu=100$ in \cref{fig:ons5}(E), where the isotropic state is a 14-saddle.
The \emph{cubic} state and the \emph{octahedral} state are two non-axisymmetric solutions with cubic symmetry as predicted in \cite{ball2021axisymmetry}.
The cubic state has eight density peaks at vertices of a regular cube, and the octahedral state has six at vertices of a regular octahedron.
Both the cubic state and the octahedral state have energy lower than $E(\rho_0)$.
Besides, the \emph{quarter} state is also a non-axisymmetric solution with $D^{4\mathrm{h}}$ symmetry.
Together with the square and hexagon states, these five non-axisymmetric solutions are plotted in spherical coordinates for better illustration in \cref{fig:ons5}(F).

\begin{figure*}[hbtp]
\centering
\includegraphics[width=\linewidth]{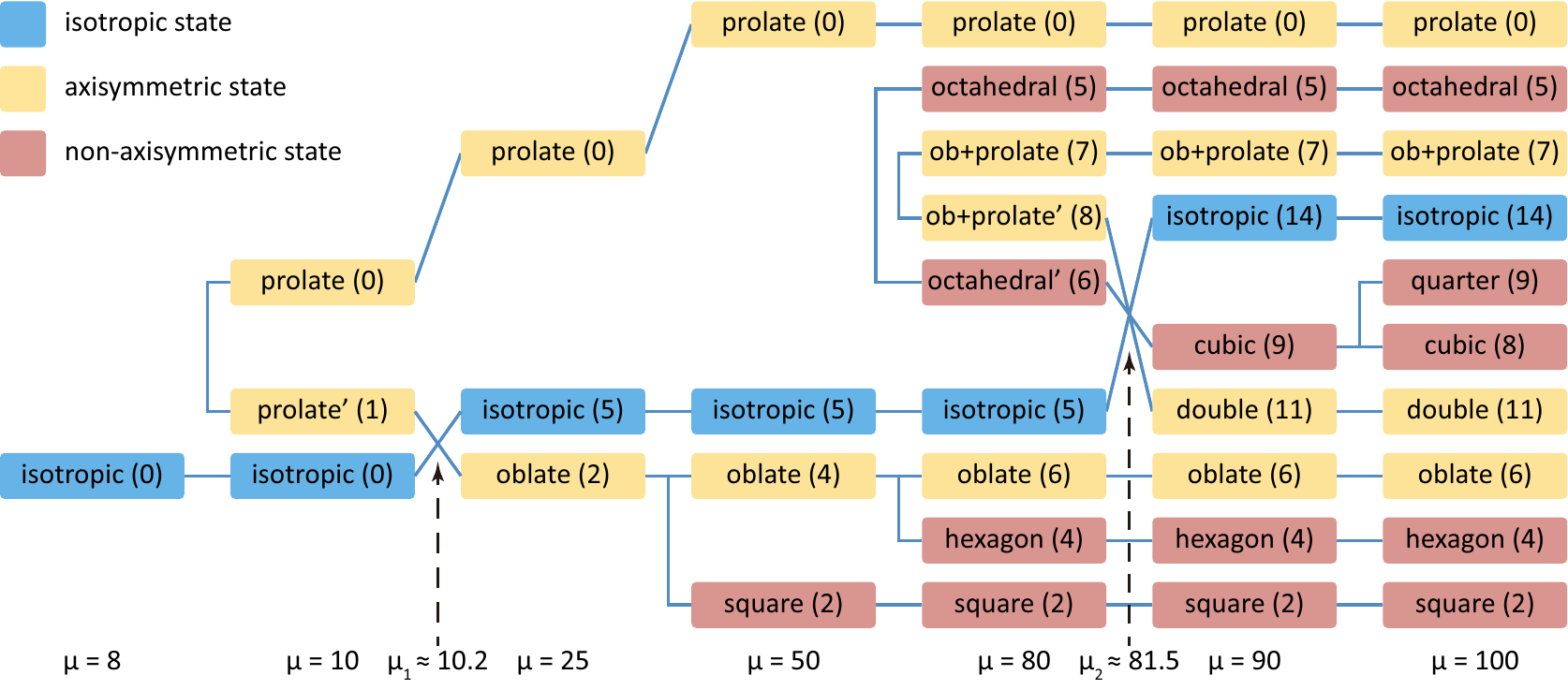}
\caption{The bifurcation diagram for the Onsager potential.
The color of each node indicates its axisymmetric property with its index in the bracket, and the corresponding parameter $\mu$ is labelled below.
Two bifurcation points of the isotropic state are $\mu = \mu_1$ and $\mu = \mu_2$.
}
\label{fig:ons6}
\end{figure*}

To present a full description of the emergence of these critical points, the bifurcation diagram is shown in \cref{fig:ons6}.
At $\mu = 10$, two axisymmetric states have emerged as a minimum (prolate) and a 1-saddle (prolate$'$) via a saddle-node bifurcation.
The first bifurcation of the isotropic state $\rho_0$ is a transcritical bifurcation at $\mu=\mu_1$ where the prolate$'$ 1-saddle becomes an oblate 2-saddle.
Then a secondary bifurcation occurs in the primary branch of the oblate state, leading to the non-axisymmetric square 2-saddle.
Successively, another secondary bifurcation occurs in this primary branch with the appearance of the hexagon 4-saddle.
Similar to the pair of prolate states, two octahedral states and two oblate-prolate (ob+prolate) states also emerge via saddle-node bifurcations (critical points with a higher index are labelled as octahedral$'$ and ob+prolate$'$ in \cref{fig:ons6}).
The isotropic state becomes a 14-saddle after a transcritical bifurcation at $\mu=\mu_2$ when the cubic and double states appear.
Then the non-axisymmetric quarter state is generated from the primary branch of the cubic state via a secondary bifurcation.

For the Onsager potential, our numerical results give positive evidence for the following conjecture.
\begin{conjecture}
  Any local minimizer is axisymmetric.
  For $\mu>\mu_1$, the global minimizer is a prolate state---an axisymmetric critical point with two density peaks at two antipodal points.
\end{conjecture}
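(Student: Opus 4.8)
The statement splits into two parts: (i) every local minimiser of $E$ is axisymmetric, and (ii) granting (i), for $\mu>\mu_1$ the unique (up to rotation) global minimiser is the prolate state. I would treat (ii) first, as the more tractable half. Restricting $E$ to axisymmetric densities $\rho(\mathbf p)=f(\mathbf p\cdot\mathbf e)$, the problem reduces to a one-dimensional variational problem for $f$ on $[-1,1]$ with the azimuthally averaged kernel $\bar k(t,s)=\frac1{2\pi}\int_0^{2\pi}\mu\sqrt{1-\bigl(ts+\sqrt{1-t^2}\sqrt{1-s^2}\cos\phi\bigr)^2}\,\mathrm d\phi$, an elliptic-integral-type expression. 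Its critical points satisfy the reduced self-consistency equation $f=Z^{-1}\exp(-\bar{\mathcal K}f)$; following the strategy used for the Maier--Saupe potential in \cite{fatkullin2005critical,liu2005axial} one would classify the solutions of this fixed-point equation (isotropic, prolate, oblate, and any further branches) by a careful analysis of the map $f\mapsto Z^{-1}\exp(-\bar{\mathcal K}f)$, and then compare energies. The decisive structural input is that $t\mapsto\sqrt{1-t^2}$ is strictly decreasing in $|t|$, so the interaction term rewards alignment: a density concentrated near the antipodal pair $t=\pm1$ has almost vanishing interaction energy, whereas a density spread over the equator $t=0$ does not. Quantifying this, together with an entropy estimate, should show the prolate branch has strictly smaller energy than every other axisymmetric critical point for $\mu>\mu_1$; the large-$\mu$ asymptotics (prolate $\to$ two Dirac masses at $\pm\mathbf e$) anchors the comparison and monotonicity of the prolate branch in $\mu$ propagates it down to $\mu_1$. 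One must additionally check that the prolate critical point is a genuine local minimiser of the full functional, i.e.\ that its Hessian \cref{eqn:onsager2v} is positive definite transverse to its two-dimensional rotation orbit, which follows from the reduced axisymmetric stability together with a computation on the non-axisymmetric modes.

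For (i), the plan is to use the Gibbs form of a critical point, $\rho=Z^{-1}\exp(-\mathcal K\rho)$, together with the spherical-harmonic diagonalisation of $\mathcal K$ for the Onsager kernel: only even degrees $2s$ contribute, with eigenvalues $\lambda_{2s}<0$ for $s\ge1$, so the interaction energy equals $\tfrac12\sum_{s\ge0}\lambda_{2s}\|\hat\rho_{2s}\|^2$ with each $\|\hat\rho_{2s}\|^2=\sum_m|\hat\rho_{2s,m}|^2$ rotation-invariant. The idea is a symmetrisation argument: given a minimiser $\rho$, replace it by an axisymmetric density $\rho^\ast$ along a distinguished axis $\mathbf e$ (for instance a principal axis of the rank-two moment tensor, or the axis maximising $\|\hat\rho_2\|$) chosen so that the low-degree moments do not decrease, and show $E(\rho^\ast)\le E(\rho)$ with equality only when $\rho$ is already axisymmetric. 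Since the interaction energy decreases when the $\|\hat\rho_{2s}\|$ increase and an axisymmetric rearrangement can be arranged to push them up, the interaction term moves favourably; the delicate point is controlling the change in the entropy term $\int\rho\log\rho$ under the rearrangement. If a clean global symmetrisation is out of reach, a fallback is a perturbative, $O(3)$-equivariant bifurcation analysis: near each $\mu_s$ a Lyapunov--Schmidt reduction, combined with the fact that the primary bifurcation at $\mu_1$ is the axisymmetric transcritical branch \cite{vollmer2017critical}, shows that the small-amplitude critical points are all axisymmetric and that minimisers among them are prolate; matched with the uniqueness of the isotropic state below the first threshold \cite{vollmer2017critical}, this already yields the conjecture on an explicit interval of $\mu$, and in particular for all $\mu_1<\mu<\mu_2$ if one can moreover rule out far-from-isotropic branches by a priori bounds on $\mathcal K\rho$. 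This mirrors, and would extend to a non-polynomial kernel, the axisymmetry-of-stable-states result for the coupled dipolar/Maier--Saupe potential \cite{zhou2007characterization}.

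The main obstacle is precisely the non-polynomial, non-analytic-in-$t$ kernel. For the Maier--Saupe and coupled dipolar/Maier--Saupe potentials the axisymmetry of minimisers ultimately rests on only degrees $\le2$ entering, so the order parameter is a single symmetric traceless $3\times3$ tensor that can be diagonalised, exhibiting a preferred axis; for the Onsager kernel infinitely many even degrees contribute and the corresponding moment tensors cannot be simultaneously diagonalised, so no single axis is built into the problem and the symmetrisation step is genuinely non-trivial. A purely Morse-theoretic bookkeeping of all critical points is also obstructed, because the index of the isotropic state grows without bound as $\mu\to\infty$ (the sequence $\{\lambda_{2s}\}$ is infinite, with more and more modes becoming unstable as $\mu$ increases), so one cannot close the count with finitely many bifurcation branches. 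I therefore expect a full proof to need either a new rearrangement inequality on $S^2$ tailored to $k(t)=\mu\sqrt{1-t^2}$ --- asserting that axisymmetrisation never increases $E$ --- or a quantitative contraction/stability estimate for the self-consistency map $\rho\mapsto Z^{-1}\exp(-\mathcal K\rho)$ that confines its minimising fixed points to the axisymmetric family; the rearrangement route looks the more promising, and would settle (i) and (ii) together.
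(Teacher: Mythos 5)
The statement you are addressing is a \emph{conjecture} in the paper, not a theorem: the authors offer no proof, only numerical evidence from solution landscapes computed up to $\mu=100$, where every secondary bifurcation they find produces non-axisymmetric \emph{saddle points} rather than minimizers, and they explicitly cite axisymmetry of minimizers for the Onsager kernel as an open question. So there is no paper proof to measure your attempt against; what matters is whether your plan closes the gap, and it does not. For part (ii), the step ``following the strategy used for the Maier--Saupe potential, classify the solutions of the reduced fixed-point equation'' is exactly where the argument breaks: that strategy hinges on the kernel containing only harmonics of degree $\leqslant 2$, so that critical points lie in a finite-dimensional exponential family and the self-consistency equation becomes a scalar equation as in \cref{eqn:msstat}. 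The paper stresses that for the Onsager kernel the prolate and oblate states \emph{cannot} be written in that form; the azimuthally averaged kernel excites infinitely many even harmonics, and no classification of the axisymmetric branches is available. Your energy comparison is likewise only a large-$\mu$ heuristic: the prolate branch is born at a saddle-node near $\mu\approx 10<\mu_1$ (not from the isotropic branch), and the claimed ``monotonicity of the prolate branch in $\mu$'' that would propagate the comparison down to $\mu_1$ is assumed, not established. Finally, verifying positive definiteness of \cref{eqn:onsager2v} transverse to the rotation orbit ``from a computation on the non-axisymmetric modes'' is an infinite-dimensional spectral problem for a non-explicit density; nothing in your sketch controls it.

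For part (i), you correctly identify that the whole weight rests on a rearrangement inequality on $S^2$ adapted to $k(t)=\mu\sqrt{1-t^2}$, but such an inequality is precisely the missing idea, not a step one can defer: since all interaction eigenvalues $\lambda_{2s}$, $s\geqslant 1$, are negative, your symmetrization would need to increase (or at least not decrease) \emph{every} rotation-invariant norm $\|\hat\rho_{2s}\|$ simultaneously along a single axis while not increasing the entropy, and no axis or rearrangement with this property is known --- the moment tensors of different degrees have no common principal frame, as you yourself note. The Lyapunov--Schmidt fallback is also weaker than you suggest: it is local in amplitude, and because the prolate minimizer already exists at finite amplitude for $\mu$ slightly below $\mu_1$, a small-amplitude analysis near $\mu_1$ (or near $\mu_2$) cannot certify what the \emph{global} minimizer is, nor exclude far-from-isotropic non-axisymmetric minimizers on $(\mu_1,\mu_2)$ without the very a priori bounds you leave unspecified. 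The paper's own remark that Ball's constrained-minimization device fails here because the axisymmetric prolate state has lower energy within each dihedral symmetry class is a further warning that ``minimize within a symmetry class'' arguments cannot detect or exclude non-axisymmetric minimizers. In short, your proposal is a reasonable research program, and its honest listing of obstacles is accurate, but each decisive step (classification of axisymmetric branches, the rearrangement inequality with entropy control, global energy comparison down to $\mu_1$) is left open, so the conjecture remains unproved by it.
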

Meanwhile, we have the following conjecture for the Onsager potential.
\begin{conjecture}
  There exists a non-axisymmetric critical point with $D^{2n\mathrm{h}}$ symmetry for some $\mu>0$.
\end{conjecture}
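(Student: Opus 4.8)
A possible route to this conjecture runs as follows, adapting the symmetric-criticality argument of Ball~\cite{ball2021axisymmetry} but at the level of a saddle rather than a minimizer. Fix $n\geq2$, let the prismatic dihedral group $G=D^{2n\mathrm h}$ act on $L^2(S^2)$ by $(g\cdot\rho)(\mathbf p)=\rho(g^{-1}\mathbf p)$, and let $\mathcal A_n=\{\rho\in L^2(S^2):g\cdot\rho=\rho\text{ for all }g\in G\}$, intersected with the constraint set \cref{eqn:onsagerrho}. Since $E$ is $O(3)$-invariant by \cref{eqn:rotationalsymmetry} and $G$ is compact, the principle of symmetric criticality applies: every constrained critical point of $E|_{\mathcal A_n}$ solves the full Euler--Lagrange equation \cref{eqn:onsagerel} (the gradient $\mathcal P(\log\rho+\mathcal K\rho)$ is $G$-invariant, hence lies in $\mathcal A_n\cap L_0^2$, and vanishing there forces it to vanish). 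It therefore suffices to produce, for some $\mu>0$, a critical point $\rho_\star\in\mathcal A_n$ that is non-isotropic and not symmetric about the principal ($z$) axis of $G$: if $\rho_\star$ were instead axisymmetric about some $\mathbf e\neq\pm z$, then, being $G$-invariant, it would also be axisymmetric about $R_z\mathbf e$, where $R_z$ is the $\pi/n$ rotation about $z$ (which does not fix the line $\mathbb R\mathbf e$, as $n\geq2$), and invariance under two distinct rotation axes forces $\rho_\star\equiv\rho_0$---a contradiction. So the task reduces to producing a $G$-invariant critical point lying off $\rho_0$ and off the $z$-axisymmetric prolate/oblate branches.

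Unlike the octahedral case of~\cite{ball2021axisymmetry}, where $\mathrm{Fix}$ of the cubic group contains no axisymmetric density and a low-energy minimizer is automatically non-axisymmetric, here the $z$-axisymmetric prolate state lies in $\mathcal A_n$, and for the Onsager kernel ($k\geq0$) its energy grows only logarithmically in $\mu$ (two density peaks, each spread over a cap of angular radius $\sim1/\mu$), undercutting every equatorially concentrated $G$-invariant configuration for large $\mu$; a direct minimization or a comparison with $E(\rho_0)$ cannot therefore deliver a non-axisymmetric state. Instead I would obtain $\rho_\star$ by an equivariant bifurcation along the oblate branch, mirroring the secondary pitchfork seen numerically (oblate$\to$square for $n=2$, oblate$\to$hexagon for $n=3$): (i) continue the $z$-axisymmetric oblate branch that emanates from the transcritical bifurcation of $\rho_0$ at $\mu=\mu_1$, via the implicit function theorem on \cref{eqn:onsagerel} together with the smoothness and strict positivity of critical points proved in~\cite{ball2021axisymmetry}; (ii) restrict the Hessian \cref{eqn:onsager2v} at the oblate state to the sector of $\mathcal A_n$ with nonzero azimuthal wavenumber divisible by $2n$ (the complement of the $z$-axisymmetric directions inside $\mathcal A_n$), and show that its lowest eigenvalue---strictly positive near $\mu_1$---first vanishes at some finite $\mu_n^\star$ and crosses transversally; (iii) apply the Crandall--Rabinowitz theorem inside $\mathcal A_n$ at $(\mu_n^\star,\text{oblate})$, the kernel being one-dimensional and spanned by a $\cos(2n\theta)$-type mode, to obtain for $\mu$ near $\mu_n^\star$ a branch of $G$-invariant critical points that break the continuous $z$-rotation symmetry and are hence non-axisymmetric, with exact symmetry $D^{2n\mathrm h}$ from the amplitude expansion. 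An alternative route reaches the same conclusion for $n=2$ by bifurcating the cubic branch of~\cite{ball2021axisymmetry} down to the quarter state; and if one only asks for a critical point invariant under $D^{4\mathrm h}$ rather than having that exact symmetry, Ball's octahedral minimizer for $\mu>\mu_2$ already qualifies.

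The crux is step (ii). Unlike the Maier--Saupe case, where the oblate state is the explicit Gaussian \cref{eqn:msstat} with computable Hessian spectrum, the Onsager kernel $k(t)=\mu\sqrt{1-t^2}$ has no closed-form oblate state, so the sign change of the relevant eigenvalue must be shown by soft arguments: a quantitative positive lower bound near $\mu_1$ (the oblate state is then close to $\rho_0$ and the destabilizing $\mathcal K$-eigenvalue in that azimuthal channel has not yet activated), and, for large $\mu$, a Rayleigh-quotient test function---a smoothed $\cos(2n\theta)$ corrugation supported on the equatorial band of a strongly oblate state---for which $\langle\upsilon,\upsilon/\rho_{\mathrm{oblate}}+\mathcal K\upsilon\rangle<0$, the interaction term gaining a contribution of size $\mathcal O(\mu)$ because the azimuthal Fourier coefficients of $|\sin|$ are negative in every nonzero even channel, while $\int\upsilon^2/\rho_{\mathrm{oblate}}$ stays bounded since the oblate density is bounded below on its support. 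A further difficulty is excluding a turning point or an earlier symmetry-breaking on the oblate branch before $\mu_n^\star$, which would require a Rabinowitz-type global bifurcation argument in the $z$-axisymmetric class or explicit \emph{a priori} bounds on axisymmetric critical points; should that global control prove elusive, the $n=2$ case still follows unconditionally from the octahedral construction, leaving only the sharp exact-symmetry statement for general $n$ open.
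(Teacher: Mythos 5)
The statement you are trying to prove is presented in the paper as a \emph{conjecture}, supported only by the numerically observed secondary bifurcations of the oblate branch; the paper offers no proof and explicitly flags the obstruction you rediscover in your second paragraph: unlike the icosahedral case of \cref{thm:icosahedral} (and Ball's octahedral construction), the $D^{2n\mathrm{h}}$-invariant class contains $z$-axisymmetric states, and the prolate state has lower energy, so minimization within the symmetry class cannot produce a non-axisymmetric critical point. Your identification of this obstruction and your proposed replacement---symmetric criticality plus an equivariant Crandall--Rabinowitz bifurcation off the oblate branch inside the $D^{2n\mathrm{h}}$-fixed sector---is a sensible program and matches the mechanism the paper observes numerically (oblate $\to$ square, oblate $\to$ hexagon).

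However, as a proof it has genuine gaps, which you partly acknowledge but do not close. First, step (ii) is the entire difficulty: the oblate state for the Onsager kernel has no closed form, so you have neither a proof that the branch continues from $\mu_1$ to large $\mu$ without turning points or earlier symmetry breaking (the implicit function theorem only gives a local continuation while the relevant linearization stays invertible, which is exactly what is in question), nor a rigorous spectral estimate along it; the large-$\mu$ Rayleigh-quotient heuristic presupposes quantitative control of the oblate profile (equatorial concentration, lower bounds on the band) that is itself unproved. Second, even granting an eigenvalue crossing, Crandall--Rabinowitz requires a simple kernel and transversal crossing at the unknown $\mu_n^\star$, neither of which follows from a sign change alone; a Rabinowitz/Krasnoselskii global or odd-multiplicity argument would weaken the conclusion about the structure and exact symmetry of the bifurcating branch. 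Third, your unconditional fallbacks do not rescue the statement in the intended sense: the octahedral minimizer is invariant under a $D^{4\mathrm{h}}$ subgroup, but the conjecture is aimed at states with density equally distributed on $2n$ points of a great circle for general $n$ (the paper's ``more precisely'' sentence), and the ``cubic branch down to the quarter state'' route is itself another unproved secondary bifurcation. So the proposal should be read as a credible strategy with the decisive analytic steps still open, not as a proof; in that respect it is consistent with the paper, which leaves the statement as a conjecture.
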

More precisely, as $\mu$ increases, we conjecture that successive secondary bifurcations will occur in the oblate branch, leading to critical points with densities equally distributed on $2n$ points of a great circle.
A similar result has been obtained that the process of finding solutions of the two-dimensional Onsager free-energy model with the Onsager potential can also be continued in \cite{wang2008multiple,wang2010phase}.
However, this conjecture in this three-dimensional case is hard to prove using the procedure in \cite{ball2021axisymmetry}, because the prolate state is an axisymmetric state (thus with $D^{2n\mathrm{h}}$ symmetry) with a lower energy.

\begin{figure*}[hbtp]
\centering
\includegraphics[width=\linewidth]{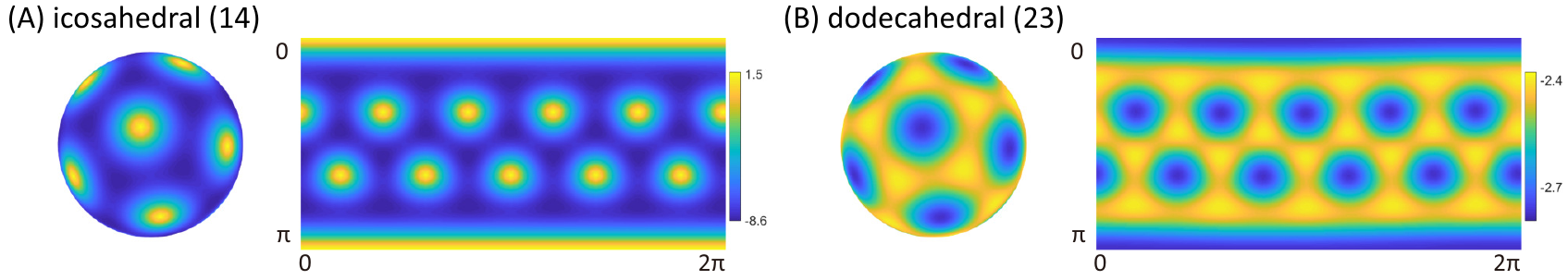}
\caption{Three-dimensional plots and expanded two-dimensional plots of non-axisymmetric critical points with icosahedral symmetry at $\mu=270$ for the Onsager potential.
(A) The icosahedral state. (B) The dodecahedral state.
}
\label{fig:ons7}
\end{figure*}

At $\mu=270>\mu_3$, we numerically found non-axisymmetric critical points with icosahedral symmetry, named an \emph{icosahedral} state and a \emph{dodecahedral} state, as shown in \cref{fig:ons7} in some coordinates.
The isotropic state is a 27-saddle, and the icosahedral state and the dodecahedral state are index-14 and index-23 saddles, respectively.
Here, a probability density $\rho$ has \emph{icosahedral symmetry} provided for $\forall \mathbf{R} \in I^{120}$,
\begin{equation}\label{eqn:icosahedraldef}
  \rho(\mathbf{p}) = \rho(\mathbf{R} \mathbf{p}), \quad \mathrm{a.e.}\, \mathbf{p}\in S^2.
\end{equation}
The icosahedral group $I^{120}$ is the symmetry group of a regular icosahedron with 120 elements, generated by orthogonal transformations,
\begin{equation}\label{eqn:icosahedralgener}
  \mathbf{R}_1=\left(\begin{array}{ccc}
       -1 & 0 & 0 \\
        0 & 1 & 0 \\
        0 & 0 & 1 \\
      \end{array}\right),\,
  \mathbf{R}_2=\frac{1}{2}\left(\begin{array}{ccc}
        -\phi & 1/\phi & 1 \\
        1/\phi & -1 & \phi \\
        1 & \phi & 1/\phi \\
      \end{array}\right),\,
  \mathbf{R}_3=\left(\begin{array}{ccc}
        1 & 0 & 0 \\
        0 & -1 & 0 \\
        0 & 0 & 1 \\
      \end{array}\right),
\end{equation}
where $\phi=(1+\sqrt{5})/2\approx 1.618$ is the golden ratio.
The following theorem shows that non-axisymmetric critical points with icosahedral symmetry exist in the Onsager model with a wide range of kernel potentials.

\begin{theorem}\label{thm:icosahedral}
  There exists a non-axisymmetric critical point of the Onsager model \cref{eqn:onsager} with icosahedral symmetry, if
  \begin{equation}\label{eqn:icosahedralthm}
    \int_{-1}^1 k(t) P_6(t) \mathrm{d}t < -2,
  \end{equation}
  where $P_6(\cdot)$ is the sixth Legendre polynomial.
\end{theorem}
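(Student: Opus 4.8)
The plan is to mimic Ball's strategy for the cubic case \cite{ball2021axisymmetry}: construct a nontrivial critical point within the finite-dimensional subspace of densities invariant under the icosahedral group $I^{120}$, and then verify it cannot be axisymmetric. First I would restrict the Euler--Lagrange equation \cref{eqn:onsagerel} to the space $X_I=\{\rho\in L^2(S^2): \rho(\mathbf{R}\mathbf{p})=\rho(\mathbf{p})\ \forall \mathbf{R}\in I^{120}\}$. Since $\mathcal{K}$ commutes with all of $O(3)$, it preserves $X_I$, and by the principle of symmetric criticality a critical point of the restriction $E|_{X_I}$ is a genuine critical point of $E$. The decomposition of $L^2(S^2)$ into spherical harmonics shows that the lowest-degree nonconstant $I^{120}$-invariant harmonics occur at degree $\ell=6$ (the first icosahedrally invariant harmonic beyond the constant), with the degree-$6$ invariant subspace being one-dimensional; call its normalized generator $Y$. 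So near $\rho_0$ the invariant branch is effectively governed by the single mode $Y$, and the relevant eigenvalue of $\mathcal K$ on the degree-$6$ sector is, by the Funk--Hecke formula, proportional to $\int_{-1}^1 k(t)P_6(t)\,\mathrm dt$.

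The key steps, in order, are: (i) write $\rho = \rho_0(1 + u)$ with $u\in X_I\cap L_0^2(S^2)$ and $\int u = 0$, and observe that the degree-$6$ component dominates; (ii) compute the Hessian $\nabla^2 E(\rho_0)$ restricted to the degree-$6$ icosahedral mode, which equals $4\pi + \lambda_6$ where $\lambda_6 = 2\pi\int_{-1}^1 k(t)P_6(t)\,\mathrm dt$ is the Funk--Hecke eigenvalue (the $4\pi$ coming from $\rho_0^{-1}=4\pi$); (iii) note that the bifurcation condition for a nontrivial invariant branch to split off the isotropic state through the degree-$6$ mode is $4\pi+\lambda_6<0$, i.e. $\int_{-1}^1 k(t)P_6(t)\,\mathrm dt<-2$, which is exactly \cref{eqn:icosahedralthm}; (iv) since $X_I$ is infinite-dimensional one must still produce an actual critical point, not just a linearized instability --- here I would either invoke a global variational argument (minimize $E$ over the closed constraint set inside $X_I$, using that $E$ is bounded below and coercive on the relevant class of bounded-away-from-zero densities, as established in \cite{ball2021axisymmetry,vollmer2017critical}, so a minimizer of $E|_{X_I}$ exists and, once the isotropic state is no longer the minimizer in $X_I$ under \cref{eqn:icosahedralthm}, it is nontrivial), or a Crandall--Rabinowitz local bifurcation argument seeded at the degree-$6$ eigenvalue crossing.

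Finally I would argue non-axisymmetry: any axisymmetric density $f(\mathbf p\cdot\mathbf e)$ that is also $I^{120}$-invariant must be invariant under the full stabilizer of no single axis (since $I^{120}$ fixes no line in $\mathbb R^3$), forcing $f$ to be constant, i.e. $\rho=\rho_0$; hence the nontrivial $I^{120}$-invariant critical point produced above is automatically non-axisymmetric. The main obstacle I expect is step (iv): passing from the spectral/bifurcation condition \cref{eqn:icosahedralthm} to the \emph{existence} of a nontrivial invariant critical point in the infinite-dimensional space $X_I$. The cleanest route is the variational one --- show that under \cref{eqn:icosahedralthm} the second variation of $E$ at $\rho_0$ is negative in the degree-$6$ invariant direction, so $\rho_0$ is not a local (hence not the global) minimizer of $E|_{X_I}$, and then invoke existence of a minimizer of $E|_{X_I}$ over the probability densities (using lower semicontinuity of the entropy and weak continuity of the quadratic interaction term, together with the a priori smoothness/positivity bounds for critical points from \cite{ball2021axisymmetry}) to conclude the global minimizer in $X_I$ is a non-axisymmetric critical point of the full functional. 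One must be slightly careful that the constant $-2$ is sharp: it comes from $\rho_0^{-1}=4\pi$ balanced against the Funk--Hecke normalization $\lambda_6=2\pi\int_{-1}^1 k(t)P_6(t)\,\mathrm dt$, so the inequality $4\pi+\lambda_6<0$ is precisely \cref{eqn:icosahedralthm}, and checking this normalization carefully is the one routine computation I would not skip.
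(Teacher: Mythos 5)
Your proposal is correct and follows essentially the same route as the paper: minimize $E$ over the class of icosahedrally symmetric probability densities (existence and symmetric criticality as in Ball's Theorem 3.5), then rule out $\rho_{\mathrm{c}}=\rho_0$ by showing the second variation at $\rho_0$ along the unique degree-$6$ icosahedrally invariant spherical harmonic equals $2\pi\int u^2\,(2+\int_{-1}^1 k(t)P_6(t)\,\mathrm{d}t)<0$ via Funk--Hecke, and conclude non-axisymmetry since an axisymmetric icosahedrally invariant density must be constant. The only cosmetic difference is that the paper writes down the invariant harmonic $u$ explicitly and verifies its properties directly, whereas you argue through the one-dimensionality of the degree-$6$ invariant sector; your normalization check ($4\pi+2\pi\int kP_6<0$) matches the paper's computation exactly.
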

\begin{proof}
Analogous to Proof of Theorem 3.5 in \cite{ball2021axisymmetry}, the Onsager model $E$ can attain a minimizer $\rho_{\mathrm{c}} \in L^1(S^2)$ among the probability density functions with icosahedral symmetry,
\begin{eqnarray}\label{eqn:icosahedralset}
  \mathcal{A}=\bigg\{\rho \in L^1(S^2): \int_{S^2} \rho(\mathbf{p}) \mathrm{d}\mathbf{p} = 1, \rho(\mathbf{p})\geqslant0,  \qquad\qquad\qquad \nonumber \\
  \qquad \rho(\mathbf{p}) = \rho(\mathbf{R} \mathbf{p}),\quad \mathrm{a.e.}\, \mathbf{p}\in S^2, \forall \mathbf{R}\in I^{120} \bigg\},
\end{eqnarray}
and $\rho_{\mathrm{c}}$ is also a critical point of the Onsager model $E$.
If $\rho_{\mathrm{c}}$ is axisymmetric, we would have $\rho_{\mathrm{c}} = \rho_0$ from the icosahedral symmetry.
Therefore, we only prove $\rho_{\mathrm{c}}\neq \rho_0$ if \cref{eqn:icosahedralthm} holds.

An example of a sixth-order (not normalized) spherical harmonic with icosahedral symmetry is \cite{meyer1954symmetries},
\begin{equation}\label{eqn:icosahedralharmonic}
  u(\mathbf{p}) = \left(\phi^2 p_1^2-p_2^2\right)\left(\phi^2 p_2^2-p_3^2\right)\left(\phi^2 p_3^2-p_1^2\right) + \frac{2+\sqrt{5}}{21}.
\end{equation}
It can be verified by direct calculations that $\Delta u(\mathbf{p})=-42u(\mathbf{p})$, $\int_{S^2} u(\mathbf{p}) \mathrm{d}\mathbf{p}=0$ and $u(\mathbf{p})=u(\mathbf{R}_i\mathbf{p})$ for $i=1,2,3$.
The Funk--Hecke Theorem \cite{atkinson2012spherical} implies that,
\begin{equation}\label{eqn:icosahedralfunkhecke}
  \int_{S^2} k(\mathbf{p}\cdot \mathbf{q}) u(\mathbf{q}) \mathrm{d} \mathbf{q}
  = 2\pi u(\mathbf{p}) \int_{-1}^{1} k(t) P_6(t) \mathrm{d}t, \quad \mathbf{p}\in S^2,
\end{equation}
where $P_6(\cdot)$ is the sixth Legendre polynomial.
From
\begin{eqnarray}\label{eqn:icosahedral2v}
  \frac{\delta^2 E}{\delta \rho^2}(\rho_0)[u]&=
  \displaystyle\int_{S^2} \left[4\pi u^2(\mathbf{p}) + u(\mathbf{p}) \int_{S^2} k(\mathbf{p} \cdot\mathbf{q}) u (\mathbf{q}) \mathrm{d}\mathbf{q} \right]\mathrm{d}\mathbf{p}\nonumber\\
  &= 2\pi \displaystyle\int_{S^2}u^2(\mathbf{p})\mathrm{d}\mathbf{p}
  \left(2+\int_{-1}^{1} k(t) P_6(t) \mathrm{d}t \right),
\end{eqnarray}
$\rho_0$ is not a local minimizer of $\mathcal{A}$ if \cref{eqn:icosahedralthm} holds, and consequently $\rho_{\mathrm{c}} \neq \rho_0$.
\end{proof}

For the Onsager potential, the condition \cref{eqn:icosahedralthm} is equivalent to $\mu>\mu_3$, which is consistent with our numerical results.
In fact, there exist non-constant spherical harmonics with icosahedral symmetry of order $r\in R_{\mathrm{I}}=\{ 6,10,12,15,16,18,20,21,22,24$--$28,\cdots\}$ \cite{zheng2000explicit}.
Therefore, by taking $u$ as an $r$-th-order spherical harmonic with icosahedral symmetry, it can be proved that there exists a non-axisymmetric critical point of the Onsager model \cref{eqn:onsager} with icosahedral symmetry, if
\begin{equation}\label{eqn:icosahedral4}
    \int_{-1}^1 k(t) P_r(t) \mathrm{d}t < -2,
\end{equation}
for some $r\in R_{\mathrm{I}}$ where $P_r(\cdot)$ is the $r$-th Legendre polynomial.

\section{Conclusions and discussions}\label{sec:conclusions}

In this article, we studied the solution landscapes of the Onsager model which involves the effects of nonlocal molecular interactions \cite{d2020numerical}.
The solution landscape provides a systematical numerical approach to find multiple critical points and reveal a global structure of the Onsager model. Special attention is paid to identify non-axisymmetric critical points, which are difficult to obtain analytically and numerically.
We first computed the solution landscapes for the dipolar potential and the Maier--Saupe potential and showed all critical points are axisymmetric.
Next, we constructed the solution landscapes for the coupled dipolar/Maier--Saupe potential and identified a novel non-axisymmetric critical point---the \emph{tennis} state.
The solution bifurcations and the existence region of the tennis state are discussed based on the numerical results.
Finally, the solution landscapes are investigated for the Onsager potential.
We found multiple non-axisymmetric critical points, including \emph{square, hexagon, octahedral, cubic, quarter, icosahedral}, and \emph{dodecahedral} states.

The primary bifurcations of the isotropic state $\rho_0$ for the Onsager potential have attracted much attention of analytical studies \cite{ball2021axisymmetry,vollmer2017critical}.
Here, the bifurcation diagram of critical points for the Onsager potential is computed based on the solution landscape in order to show a full description of the emergence of the critical points.
First, the bifurcation of the isotropic state at $\mu=\mu_1$ is shown to lead to an axisymmetric critical point branch, which is consistent with the analytical results in \cite{vollmer2017critical}.
It has been conjectured that the other bifurcations of the isotropic state could lead to non-axisymmetric critical points \cite{ball2021axisymmetry}.
The existence of the cubic state at $\mu>\mu_2$ and the dodecahedral state at $\mu>\mu_3$ accords with this conjecture.
Furthermore, we believe the larger bifurcation points of the isotropic state can also lead to non-axisymmetric critical points.
It should be pointed out that the kernel $k(\mathbf{p} \cdot\mathbf{q})$ in \cref{eqn:onsager}, which is proportional to the number density and the average excluded volume \cite{ball2017liquid,palffy2017onsager,taylor2021cavity}, cannot become unconditionally large.
Therefore, an overlarge value of $\mu$ can be unphysical, and the results for larger parameters are not presented here.

An open question of the Onsager functional is whether local or global minimizers for the Onsager potential are axisymmetric \cite{ball2021axisymmetry}.
For instance, there could be secondary bifurcation from a primary branch of axisymmetric solutions to a non-axisymmetric minimizer.
We systematically constructed the solution landscapes of the Onsager functional with the Onsager potential for $\mu \leqslant100$ and demonstrated that secondary bifurcations indeed occur and lead to non-axisymmetric saddle points, not minimizers.
Therefore, our numerical results support the conjecture that all local minimizers are axisymmetric for the Onsager potential.

\section*{Acknowledgments}
This work was supported by the National Natural Science Foundation of China No.~12050002, No.~21790340.
The authors thank Prof.~John Ball and Prof.~Wei Wang for helpful discussions.
J.~Y. acknowledges the support from the Elite Program of Computational and Applied Mathematics for Ph.D. Candidates of Peking University.

\appendix
\section{Reduced Onsager model}\label{apd:a}

Generally speaking, the Onsager free-energy model \cref{eqn:onsager} is an infinite-dimensional problem.
However, for the coupled dipolar/Maier--Saupe potential (including the dipolar potential and the Maier--Saupe potential), the Onsager model \cref{eqn:onsager} can be reduced to a finite-dimensional model.
For the Onsager potential, this reduced model cannot work.

Supposing that $\rho$ is a critical point for the coupled dipolar/Maier--Saupe potential, we rewrite the Euler--Lagrange equation \cref{eqn:onsagerel} as,
\begin{equation}\label{eqn:onsagerelphi}
  \log \phi(\mathbf{p}):=\log\rho(\mathbf{p})-\xi=-\int_{S^2} k(\mathbf{p} \cdot\mathbf{q}) \rho (\mathbf{q}) \mathrm{d}\mathbf{q}.
\end{equation}
The dipolar potential $-\sigma (\mathbf{p}\cdot\mathbf{q})$ and the Maier--Saupe potential $-\kappa (\mathbf{p}\cdot\mathbf{q})^2$ are eigenfunctions of the Laplace--Beltrami operator $\Delta$ with respect to $\mathbf{p}$ on $S^2$, and the corresponding eigenvalues are $-2$ and $-6$ \cite{fatkullin2005critical}, so we have,
\begin{equation}\label{eqn:laplacephi}
  (\Delta+2)(\Delta+6) \log \phi(\mathbf{p})=-\int_{S^2} (\Delta+2)(\Delta+6)k(\mathbf{p} \cdot\mathbf{q}) \rho (\mathbf{q}) \mathrm{d}\mathbf{q}=0.
\end{equation}
Since the corresponding eigenspaces $\Lambda_1$ and $\Lambda_2$ are three- and five-dimensional subspaces spanned by spherical harmonics of the first and second orders respectively, $\phi(\mathbf{p})$ in \cref{eqn:onsagerelphi} can be expressed as a simple form of,
\begin{equation}\label{eqn:fmphi8}
  \exp( m_1p_1 + m_2p_2 + m_3p_3 + m_4(p_1^2-p_3^2)+ m_5(p_2^2-p_3^2)
  +m_6 p_1p_2 + m_7 p_2p_3 +m_8p_1p_3).
\end{equation}
Furthermore, by properly choosing the coordinate system to diagonalize the second-order term in \cref{eqn:fmphi8}, $\phi$ can be expressed as
\begin{equation}\label{eqn:fmphi5}
   \phi_{\mathrm{r}}(\mathbf{p};\bm m) = \exp\left(m_1p_1 + m_2p_2 + m_3p_3 + m_4(p_1^2-p_3^2)+ m_5(p_2^2-p_3^2)\right),
\end{equation}
with only five parameters.
Therefore, we consider a reduced finite-dimensional Onsager model,
\begin{equation}\label{eqn:onsagerreduced}
   E_{\mathrm{r}}(\bm m) = E\left(\rho_{\mathrm{r}}(\bm m)\right), \quad
   \rho_{\mathrm{r}}(\mathbf{p};\bm m)=Z_{\mathrm{r}}(\bm m)^{-1} \phi_{\mathrm{r}}(\mathbf{p};\bm m),
\end{equation}
where $Z_{\mathrm{r}}(\bm m) = \int_{S^2} \phi_{\mathrm{r}}(\mathbf{p};\bm m) \mathrm{d}\mathbf{p}$ is the partition function of $\phi_{\mathrm{r}}$.
The gradient of the reduced model \cref{eqn:onsagerreduced} is
\begin{equation}\label{eqn:fmene_m}
  \frac{\partial E_{\mathrm{r}}}{\partial\bm m} = \int_{S^2}
  \frac{\partial\rho_{\mathrm{r}}}{\partial\bm m}\frac{\delta E}{\delta \rho}(\rho_{\mathrm{r}})  \mathrm{d}\mathbf{p},
\end{equation}
where
\begin{equation}\label{eqn:fmrho_m}
  \frac{\partial\rho_{\mathrm{r}}}{\partial\bm m} =
  \frac{1}{Z_{\mathrm{r}}}\frac{\partial\phi_{\mathrm{r}}}{\partial\bm m}
  + \frac{\rho_{\mathrm{r}}}{Z_{\mathrm{r}}}\frac{\partial Z_{\mathrm{r}}}{\partial\bm m}.
\end{equation}
Therefore, any critical point of the Onsager model \cref{eqn:onsager} can also be found as a critical point (with an orthogonal transformation) in the reduced Onsager model \cref{eqn:onsagerreduced}.
However, there may exist extraneous solutions in the reduced Onsager model \cref{eqn:onsagerreduced}, so each critical points of \cref{eqn:onsagerreduced} should be verified.
The Hessian of \cref{eqn:onsagerreduced} is
\begin{equation}\label{eqn:fmene_mv}
  \frac{\partial^2 E_{\mathrm{r}}}{\partial\bm m^2} [\bm v] =
  \int_{S^2}
  \frac{\partial^2 \rho_{\mathrm{r}}}{\partial\bm m^2}[\bm v]\frac{\delta E}{\delta \rho}(\rho_{\mathrm{r}})
  +\frac{\partial\rho_{\mathrm{r}}}{\partial\bm m} \frac{\delta^2 E}{\delta \rho^2} (\rho_{\mathrm{r}}) \left[ \frac{\partial\rho_{\mathrm{r}}}{\partial\bm m}\cdot \bm v\right]
  \mathrm{d}\mathbf{p},
\end{equation}
where
\begin{equation}\label{eqn:fmrho_mm}
  \frac{\partial^2 \rho_{\mathrm{r}}}{\partial\bm m^2} =
  \frac{1}{Z_{\mathrm{r}}}\frac{\partial^2\phi_{\mathrm{r}}}{\partial\bm m^2}
  -\frac{\rho_{\mathrm{r}}}{Z_{\mathrm{r}}}\frac{\partial^2 Z_{\mathrm{r}}}{\partial\bm m^2}
  +\frac{2\rho_{\mathrm{r}}}{Z_{\mathrm{r}}^2} \frac{\partial Z_{\mathrm{r}}}{\partial\bm m}
  \frac{\partial Z_{\mathrm{r}}}{\partial\bm m}^\top
  -\frac{1}{Z_{\mathrm{r}}^2} \left(\frac{\partial\phi_{\mathrm{r}}}{\partial\bm m}\frac{\partial Z_{\mathrm{r}}}{\partial\bm m}^\top
  +\frac{\partial Z_{\mathrm{r}}}{\partial\bm m}\frac{\partial\phi_{\mathrm{r}}}{\partial\bm m}^\top\right).
\end{equation}
It can be easily observed that for a critical point of both models, its index in the Onsager model \cref{eqn:onsager} is no less than that in \cref{eqn:onsagerreduced}.

\bibliographystyle{elsarticle-num}
\bibliography{onsager}

\end{document}